\theoremstyle{plain}
\newtheorem{thm}{Theorem}[section]
\newtheorem{cor}[thm]{Corollary}
\newtheorem{lem}[thm]{Lemma}
\newtheorem{prop}[thm]{Proposition}
\theoremstyle{definition}
\newtheorem{Def}[thm]{Definition}
\newtheorem{ex}[thm]{Example}
\newcommand{\bbZ}{\mathbb Z}
\newcommand{\mcA}{\mathcal A}
\newcommand{\col}{\!:\:\!}
\newcommand{\cof}[1][]{\xymatrix@1@C=15pt{{}\ar@{>->}[r]^{#1} & {}}}
\newcommand{\fib}[1][]{\xymatrix@1@C=15pt{{}\ar@{->>}[r]^{#1} & {}}}
\newcommand{\id}{\mathop\mathrm{id}\nolimits}
\newcommand{\ef}{\mathrm{ef}}
\newcommand{\Ker}{\mathop\mathrm{Ker}\nolimits}
\newcommand{\coker}{\mathop\mathrm{coker}\nolimits}
\newcommand{\op}{\mathsf{op}}
\newcommand{\cofib}{\mathsf{cof}}
\newcommand{\Hom}{\mathsf{Hom}}
\newcounter{sideremark}
\def\Z{\mathbb Z}
\def\Z2{\mathbb Z_2}
\newcommand{\Ch}{\mathsf{Ch}}
\newcommand{\sSet}{\mathsf{sSet}}
\newcommand{\Ab}{\mathsf{Ab}}
\newcommand{\OgsSet}{{\mathsf{sSet}}^{\mathcal{O}_G ^\mathsf{op}}}
\newcommand{\GsSet}{{\mathsf{sSet}^G}}
\newcommand{\Ocat}{\mathcal{O}}
\newlength{\hlp}
\newcommand{\stdsimp}[1]{\Delta^{#1}}
\newcommand{\horn}[2]{%
\mbox{$\xy
<0pt,-\the\fontdimen22\textfont2>;p+<.1em,0em>:
{\ar@{-}(0,0.1);(3,7)},
{\ar@{-}(3,7);(6,0.1)},
{\ar@{-}(3.2,7);(6.2,0.1)},
{\ar@{-}(3.4,7);(6.4,0.1)}
\endxy\;\!{}^{#1}_{#2}$}}
\def\bo{\partial} 
\newcommand{\holan}{\mathsf{hoLan}}
\newcommand{\Ra}{\xRightarrow{\ \ }}
\newcommand{\La}{\xLeftarrow{\ \ }}
\newcommand{\LRa}{\xLeftrightarrow{\ \ \ }}
\newcommand{\ccat}{\mathcal C}
\newcommand{\icat}{\mathcal I}
\newcommand{\jcat}{\mathcal J}
\newcommand{\hocolim}{\mathsf{hocolim} \,}
\newcommand{\sk}{\mathsf{sk}}
\begin{document}

\title{Effective homology for homotopy colimit and cofibrant replacement\footnote{The research of M. F. was supported by Masaryk University project MUNI/A/0821/2013}}
\author{M. Filakovsk\'{y}}
\date{\today}
\maketitle

\abstract{
We extend the notion of simplicial set with effective homology presented in \cite{serg} to diagrams of simplicial sets. Further, for a given finite diagram of simplicial sets $X \colon \icat \to \sSet$ such that each simplicial set $X(i)$ has effective homology, we present an algorithm computing the homotopy colimit $\hocolim X$ as a simplicial set with effective homology. We also give an algorithm computing the cofibrant replacement $X^\cofib$ of $X$ as a diagram with effective homology. This is applied to computing of equivariant cohomology operations.
}

\section{Introduction}
In recent years a framework of  \emph{simplicial sets with effective homology} developed by Sergeraert et al. (see e.g \cite{serg}) was used by a group of authors in papers  \cite{aslep, polypost, cmk, fv} to address a variety of computational problems in homotopy theory and algebraic topology. The main property of simplicial sets with effective homology is that we are able to perform homological computations with them. This is important when working with infinite simplicial sets such as the Eilenberg--MacLane spaces $K(\pi, n)$.

The framework also provides a collection of algorithms on simplicial sets with effective homology in such a way that if the inputs are simplicial sets with effective homology, the output are also simplicial sets with effective homology. These algorithms usually describe commonly used constructions from algebraic topology such as Cartesian product, loop space, bar construction, mapping cylinder, total space of fibration (see \cite{serg, cmk, polypost}) and homotopy pushout \cite{aslep, heras}.

In this paper, we present two main results. Firstly, we enlarge the above mentioned collection of constructions by presenting an algorithm that computes the Bousfield--Kan model of homotopy colimit of a finite diagram of simplicial sets with effective homology. We formulate this in the following  proposition.
\begin{prop}\label{prop:main}
Let $\icat$  be a finite category and $X\colon \icat \to \sSet$ a functor. Suppose that every $X(i)$ has effective homology for all $i\in \icat$ and that all maps in the diagram $X$ are computable. Then the space $\hocolim X$ has effective homology.
\end{prop}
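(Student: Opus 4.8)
The plan is to present $\hocolim X$ through its Bousfield--Kan simplicial replacement, to identify its normalized chains with the total complex of a first-quadrant bicomplex, and to assemble the effective homology of that total complex from the effective homologies of the individual $X(i)$ by a perturbation-lemma argument; finiteness of $\icat$ is exactly what turns the result from ``locally effective'' into ``effective''.

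First I would fix the model. Write $\hocolim X$ as the diagonal of the bisimplicial set $Z$ with $Z_{p,\bullet}=\coprod_{\sigma}X(i_0)$, where $\sigma$ runs over the $p$-simplices $i_0\to\cdots\to i_p$ of the nerve $N\icat$; the $\bullet$-direction carries the simplicial structures of the $X(i)$, while the $p$-direction carries the simplicial structure of $N\icat$ together with the transition maps of $X$ (one of the extreme face maps applies $X(i_0\to i_1)$). Since $\icat$ is finite, every $N\icat_p$ is a finite, explicitly enumerable set, and since all maps of $X$ are computable, so are all structure maps of $Z$ and hence of $\hocolim X$; thus $C_*(\hocolim X)$ is a locally effective chain complex, and it remains only to exhibit an equivalence with an effective one.

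Next I would invoke the Eilenberg--Zilber theorem for bisimplicial sets: the normalized chain complex of $\mathrm{diag}\,Z$ forms a reduction with the total complex of the bicomplex $D_{p,q}=\bigoplus_{\sigma\in N\icat_p}C_q\bigl(X(i_0)\bigr)$, whose vertical differential comes from the differentials of the $C_*(X(i))$ and whose horizontal differential is the alternating sum of the (transition-map-twisted) nerve faces. This is a first-quadrant bicomplex, so in each total degree only the finitely many rows $p=0,\dots,n$ contribute. Each column $D_{p,\bullet}$ is a finite direct sum of complexes $C_*(X(i))$, all of which have effective homology by hypothesis; direct sums that are finite in each degree preserve effective homology, so every column has effective homology with effective model $\bigoplus_{\sigma\in N\icat_p}E_*(i_0)$. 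Passing from columns to the total complex is the standard step: view the horizontal differential as a perturbation of the vertical one, apply the Basic Perturbation Lemma to the equivalence relating $\mathrm{Tot}\,D$ equipped with the vertical differential only to $\bigoplus_{p,\sigma}E_*(i_0)$, and note that its only hypothesis --- pointwise nilpotency of the composite of the horizontal differential with the contracting homotopy --- holds because the horizontal differential strictly lowers the row degree $p$ while the homotopies act within a fixed row. The resulting perturbed small complex is $\bigoplus_{p,\sigma}E_*(i_0)$ with a corrected differential; by finiteness of $\icat$ it is finitely generated free in each total degree, and both its unperturbed differential (built from the $E_*(i)$, the combinatorics of $N\icat$ and the computable transition maps) and the correction term (a finite sum, by the nilpotency above) are computable. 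Hence it is effective, and $C_*(\hocolim X)$ is equivalent to it, i.e.\ $\hocolim X$ has effective homology.

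I expect the genuine work to be concentrated in the perturbation step: the reductions witnessing effective homology of the several $X(i)$ are \emph{not} compatible with the horizontal differential of the bicomplex, so one cannot simply substitute $E_*(i)$ for $C_*(X(i))$; the role of the Basic Perturbation Lemma is precisely to repair this, and the one real verification is its nilpotency hypothesis, which reduces to the row-degree bookkeeping noted above. Everything else --- the Eilenberg--Zilber reduction, stability of effective homology under direct sums finite in each degree, and the care needed to keep all data algorithmic given that the transition maps of $X$ are computable --- is routine within the framework of \cite{serg}.
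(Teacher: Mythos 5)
Your proof is correct and follows essentially the same route as the paper: the paper filters the Bousfield--Kan formula by the skeleta of the nerve direction, identifies each filtration quotient (via the ordinary Eilenberg--Zilber reduction applied to $C(\stdsimp{k}\times X(i_0),\partial\stdsimp{k}\times X(i_0))$) with a finite direct sum of suspended copies $s^kC(X(i_0))$, and assembles the pieces with the perturbation lemmas, using exactly your nilpotency-from-decreasing-filtration-degree argument (this is Lemma~\ref{lem:main}, quoted from \cite{aslep}). Your packaging via the bisimplicial replacement and the generalized Eilenberg--Zilber reduction to the total complex of the bicomplex is an equivalent presentation of the same filtration, so the two arguments coincide in substance.
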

 
 Secondly, we define the notion of a diagram of simplicial sets with effective homology. This notion is stronger than the notion of pointwise effective homology of a diagram defined by the requirement that every simplicial set in the diagram has effective homology. We consider the projective model structure on the category of functors $\icat \to \sSet$  \cite{hovey} in which weak equivalences and fibrations are pointwise weak equivalences and fibrations, respectively. Our second result reads as 
 
\begin{prop}\label{prop:submain}
Let $\icat$  be a finite category and $X\colon \icat \to \sSet$ a functor. Suppose that every $X(i)$ has effective homology for all $i\in \icat$ and that all maps in the diagram $X$ are computable. Then there is an algorithm which provides a diagram $X^{\cofib}$ which is a cofibrant replacement of $X$ and has effective homology as a diagram.
\end{prop}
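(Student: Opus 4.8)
The plan is to use the standard \emph{projective cofibrant replacement} of $X$, built from a free simplicial resolution, and then to observe that this replacement is --- degreewise, and compatibly with the maps of $\icat$ --- assembled out of finite coproducts of the simplicial sets $X(i)$, so that the effective homology of its realisation can be produced exactly as in Proposition~\ref{prop:main}, only now carried out naturally in $i$.

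First I would fix the construction. For $i\in\icat$ let $\icat/i$ denote the comma category (finite, since $\icat$ is) and $\pi_i\colon\icat/i\to\icat$ the forgetful functor, and set
\[
 X^{\cofib}(i)\;\defeq\;\hocolim_{\icat/i}\bigl(X\circ\pi_i\bigr)
\]
in the simplicial-set Bousfield--Kan model, so that the set of $n$-simplices of $X^{\cofib}(i)$ is the coproduct of copies $X(j_0)_n$, one for each chain $j_0\to j_1\to\cdots\to j_n\to i$ of composable arrows of $\icat$. A morphism $f\colon i\to i'$ of $\icat$ induces a functor $\icat/i\to\icat/i'$ by postcomposition, hence a map $X^{\cofib}(f)$ that on $n$-simplices appends $f$ to each chain and is the identity on the $X(j_0)$-coordinate; this makes $X^{\cofib}$ a functor $\icat\to\sSet$. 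That $X^{\cofib}$ is projective-cofibrant and that the augmentation $X^{\cofib}\to X$ --- which at $i$ is induced by the terminal object $\id_i$ of $\icat/i$ --- is a pointwise weak equivalence, in fact a pointwise simplicial homotopy equivalence via the extra degeneracy coming from $\id_i$, are standard facts about the projective model structure on $\sSet^{\icat}$ \cite{hovey}. Thus $X^{\cofib}$ is a cofibrant replacement of $X$, and the content of the proposition is to equip it with effective homology \emph{as a diagram}.

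Applying Proposition~\ref{prop:main} to the finite category $\icat/i$ and the diagram $X\circ\pi_i$ --- whose values $X(j)$ have effective homology and whose maps $X(f)$ are computable --- already shows that each space $X^{\cofib}(i)$ has effective homology; what remains is naturality in $i$. Here I would use the explicit shape above: in each simplicial degree $n$ the $n$-simplices of $X^{\cofib}(i)$ form a finite coproduct of copies of the $X(j_0)$'s; the faces and degeneracies are built from identities, the (computable) structure maps of $X$, and fold/inclusion maps of coproducts; and a morphism $f$ of $\icat$ acts on this degree by reindexing summands. Since finite coproducts of simplicial sets with effective homology again have effective homology, the evident reduction data on each degree --- the coproduct of the reductions for the individual $X(j_0)$'s --- is visibly natural in $i$, so the simplicial object underlying $X^{\cofib}$ is a simplicial object \emph{in diagrams of simplicial sets with effective homology}. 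Running the realisation argument of Proposition~\ref{prop:main} on this simplicial object while tracking the $\icat$-action throughout --- the filtration of the diagonal by skeleta, the iterated mapping cones realising the pushouts $\partial\Delta^n\times(-)\hookrightarrow\Delta^n\times(-)$, the Eilenberg--Zilber reduction, and the passage to the sequential colimit over skeleta, all of which are natural operations --- then yields a diagram of chain complexes $i\mapsto C_*(X^{\cofib}(i))$ together with a natural span of diagram-level strong equivalences to a diagram of effective chain complexes. That is precisely effective homology of $X^{\cofib}$ as a diagram.

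I expect the main obstacle to be this last piece of bookkeeping: checking that every reduction, homotopy operator and strong equivalence produced by the single-space algorithm of Proposition~\ref{prop:main} can be, and with the coproduct description above is, natural with respect to the morphisms of $\icat$, so that one is genuinely running that algorithm ``in families over $\icat$'' rather than merely objectwise. A secondary subtlety is that when $\icat$ has nonidentity endomorphisms or directed cycles the resolution has nondegenerate simplices in unboundedly high degree, so $X^{\cofib}(i)$ is genuinely infinite and the skeletal/perturbation argument cannot be traded for a finite computation; but that is already the situation treated in Proposition~\ref{prop:main}, so no new difficulty enters there.
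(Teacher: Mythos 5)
Your proposal is correct and follows essentially the same route as the paper: the paper packages the argument as Theorem~\ref{thm:main} on the Bousfield--Kan model of $\holan_p X$ (taking $p=\id_\icat$ for the cofibrant replacement), filters by the preimages of the skeleta of $N\icat$, and identifies the filtration quotients as $\bigoplus s^k C(X(j_0))\otimes\mathbb{Z}\icat(j_k,-)$, exactly the ``finite coproducts reindexed by appending arrows'' structure you describe. The one point you leave implicit --- that this reindexing action makes the quotient diagrams \emph{cellular}, hence effective as diagrams rather than merely pointwise --- is precisely where the paper invokes Example~\ref{ex:main} and Corollary~\ref{cor:otimes} before concluding with the filtered perturbation argument of Lemma~\ref{lem:main}.
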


In \cite{aslep}, the authors describe algorithmic approach for computing homotopy classes between spaces $X$ and $Y$ with a free action of a finite group if the dimension of $X$ is twice the connectivity of $Y$.
The main motivation for Proposition \ref{prop:submain} is the extension of this result to simplicial sets with nonfree action of a finite group using the fact the category $\sSet ^G$ of simplicial sets with an action of a group $G$ is Quillen equivalent to a category of functors $\Ocat_G ^\op \to \sSet$ where $\Ocat_G$ is a category of orbits of the group $G$.

In section \ref{sec:apl}, we present a special case of such a computation by computing the  equivariant cohomology operations, i.e. sets of equivariant homotopy classes $[K(\pi,n), K(\rho,m)]_G$ where $\pi$ and $\rho$ are diagrams of abelian groups \cite{bredon, dwykan, alaska} equipped with actions of a finite group $G$. These can be computed as the cohomology groups of the cofibrant replacement of $K(\pi,n)$ on the level of diagrams.

\section{Effective homology} \label{sec:ef}

We first repeat basic notions of effective homological algebra.

\subsection*{Locally effective simplicial sets}

The basic building blocks of the theory are locally effective simplicial sets and computable maps between them.

Let $X$ be a simplicial set. We say that $X$ is \emph{locally effective} 
if we are given a specified encoding of the simplices of $X$ and a collection of algorithms computing the face and degeneracy operations on the simplices of $X$. The notation suggests that we do not have any global information about $X$.
Similarly, a map of locally effective simplicial sets is \emph{computable} if there is an algorithm that for any given simplex in the domain gives the encoding for its image.

\subsection*{Effective chain complexes, reductions and strong equivalences}

Let $C_*$ be a chain complex. We call $C_*$ \emph{cellular} if there is an indexing set $A$ such that for every $\alpha \in A$ we are given $c_\alpha \in C_*$ such that
every chain $c \in C_*$ can be expressed uniquely as a combination
\begin{equation}
c=\sum k_{\alpha} c_{\alpha}\label{eq:cellular}
\end{equation}
with integer coefficients $k_\alpha$ in $\mathbb{Z}$.  In other words $C_*$ is cellular if we have a basis for $C_*$ such that any element $c\in C_*$ can be expressed uniquely as a combination of elements of this basis.

We call $C_*$ \emph{locally effective} if the elements of $C_*$ have finite encoding and there are algorithms computing the multiplication, addition, zero, inverse and differential for the elements of $C_*$. Further there is a basis and an algorithm that for every $c\in C_*$ computes \eqref{eq:cellular} and decides whether $c$ lies in the basis.

The chain complex $C_*$ is called \emph{effective} if it is locally effective and there is an algorithm that for given $n \in \mathbb{N}$ generates a finite basis $c_\alpha \in C_n$.

Clearly, the effective chain complexes have nice computational properties e.g. one can compute the homology of such a chain complex.

\begin{Def}
Let $C_*, C'_*$ be chain complexes. A \emph{reduction} $C_*\Ra C'_*$
is a triple $(\alpha,\beta,\eta)$ pictured as belowwhere $\alpha, \beta$ are chain maps and $\eta$ is a morphism of graded groups.
\[(\alpha,\beta,\eta)\col C_*\Ra C'_*\quad\equiv\quad\xymatrix@C=30pt{
C_* \ar@(ul,dl)[]_{\eta} \ar@/^/[r]^\alpha & C'_* \ar@/^/[l]^{\beta}
}\]
satisfying 
\begin{equation}\label{eq:reduction}
\begin{array}{lll}
 \eta\beta = 0 & \alpha \eta = 0& \alpha \beta  = \id \\
 \eta\eta = 0& \bo\eta + \eta \bo = \id - \beta\alpha&
\end{array}
\end{equation}

A \emph{strong equivalence} between chain complexes $C_*\LRa C'_*$ consists of a span of reduction
\[\xymatrix@C=20pt{ & \widehat C_* \ar@{=>}[dl] \ar@{=>}[dr] & \\
C_* & & C'_*
}\]
\end{Def}

Strong equivalences can be composed \cite{serg}, producing another strong equivalence. We say that a locally effective chain complex $C_*$ is equipped with \emph{effective homology} if there is a strong equivalence $C_*\LRa C_*^\ef$ of $C_*$ with some effective chain complex $C_*^\ef$. A locally effective simplicial set $X$ is said to have \emph{effective homology} if its chain complex $C_* (X)$ does.

Objects with effective homology further have nice properties:
\begin{lem} \label{lem:results}
Let $C_* , C'_*$ be chain complexes with effective homology and let $X, Y$ be simplicial sets with effective homology. Then the following holds:
\begin{enumerate}
\item $C_{*} \oplus C'_{*}$, $C_{*} \otimes C'_{*}$ have effective homology. 
\item The space $X \times Y$ has effective homology.
\end{enumerate}
\end{lem}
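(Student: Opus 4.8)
The plan is to bootstrap from two standard constructions of effective homological algebra — the degreewise direct sum and tensor product of reductions, and the Eilenberg--Zilber reduction — together with the elementary observation that the class of \emph{effective} chain complexes is closed under $\oplus$ and $\ot$. Throughout I write $C_*\LRa D_*$ and $C'_*\LRa D'_*$ for the strong equivalences witnessing that $C_*$ and $C'_*$ have effective homology, so that $D_*,D'_*$ are effective.

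\emph{Direct sums and tensor products of reductions.} First I would check that reductions are stable under $\oplus$ and $\ot$. For a pair of reductions $(\alpha_1,\beta_1,\eta_1)$, $(\alpha_2,\beta_2,\eta_2)$ with compatible sources and targets, the triple $(\alpha_1\oplus\alpha_2,\ \beta_1\oplus\beta_2,\ \eta_1\oplus\eta_2)$ is visibly a reduction, since the five identities \eqref{eq:reduction} hold in each summand. The tensor case is the one genuine computation: from $(\alpha_1,\beta_1,\eta_1)\col A_*\Ra B_*$ and $(\alpha_2,\beta_2,\eta_2)\col A'_*\Ra B'_*$ one forms the triple with chain maps $\alpha_1\ot\alpha_2$, $\beta_1\ot\beta_2$ and a homotopy assembled from $\eta_1\ot\id$ and $(\beta_1\alpha_1)\ot\eta_2$ (up to Koszul signs), and one verifies \eqref{eq:reduction}; this is the classical ``tensor product of reductions'' of \cite{serg}. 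Since a span of reductions $C_*\Leftarrow\widehat C_*\Rightarrow D_*$ can be $\oplus$-ed (resp.\ $\ot$-ed) leg by leg with a second such span, strong equivalences $C_*\LRa D_*$, $C'_*\LRa D'_*$ produce strong equivalences $C_*\oplus C'_*\LRa D_*\oplus D'_*$ and $C_*\ot C'_*\LRa D_*\ot D'_*$.

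\emph{Closedness of effectivity.} To finish part (1) I would verify that $D_*\oplus D'_*$ and $D_*\ot D'_*$ are again effective. Local effectivity is immediate: elements are pairs (resp.\ finite $\bbZ$-combinations of tensors of basis elements), and addition, the zero and inverse, the differential (via the Leibniz rule $\bo(x\ot y)=\bo x\ot y\pm x\ot\bo y$), and the test of membership in the basis are computed componentwise. For the remaining clause, a finite basis of $(D_*\oplus D'_*)_n$ is the disjoint union of the given finite bases of $D_n$ and $D'_n$, and a finite basis of $(D_*\ot D'_*)_n=\bigoplus_{p+q=n}D_p\ot D'_q$ is the finite set of tensors $d_\alpha\ot d'_\gamma$ with $d_\alpha,d'_\gamma$ ranging over the finite bases of $D_p,D'_q$ for the finitely many pairs $p+q=n$; in both cases the basis is produced by an algorithm. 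Combining with the previous paragraph gives part (1).

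\emph{Products of simplicial sets, and the main obstacle.} For part (2) I would invoke the effective Eilenberg--Zilber theorem: the Alexander--Whitney map, the shuffle map and the Eilenberg--Mac Lane homotopy constitute an explicit, algorithmic reduction $C_*(X\times Y)\Ra C_*(X)\ot C_*(Y)$ whenever $X,Y$ are locally effective \cite{serg}. A reduction is in particular a strong equivalence, strong equivalences compose, and by part (1) $C_*(X)\ot C_*(Y)$ has effective homology; hence so does $C_*(X\times Y)$, i.e.\ $X\times Y$ has effective homology. The only step requiring real work is the tensor product of reductions (equivalently, the perturbation-flavoured bookkeeping hidden in the Eilenberg--Mac Lane homotopy); everything else is the routine but necessary check that ``effective'' — in particular the degreewise generation of a \emph{finite} basis — survives $\oplus$ and $\ot$, which is precisely what upgrades the statement from an existence claim to an algorithm.
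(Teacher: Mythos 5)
Your proposal is correct and follows essentially the same route as the paper: the paper likewise handles $C_*\oplus C'_*$ as an easy check, defers the tensor product of strong equivalences to \cite{serg} (Proposition 61), and obtains part (2) by composing the Eilenberg--Zilber reduction $C_*(X\times Y)\Ra C_*(X)\otimes C_*(Y)$ with part (1). You simply spell out more of the details (the tensor product of reductions and the closure of effectivity under $\oplus$ and $\otimes$) that the paper leaves to the references.
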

\begin{proof}
The proof that $C_{*} \oplus C'_{*}$ have effective homology is easy, for the proof that $C_{*} \otimes C'_{*}$ has effective homology, we refer the reader to \cite{serg}, Proposition 61.

For the second part, we use the Eilenberg--Zilber reduction $C_* (X \times Y) \Ra C_* (X) \otimes C_* (Y)$ first introduced in \cite{eml1, eml2}. The proof is finished using the first part of the statement.
\end{proof}

\subsection*{Perturbation Lemmas}
Consider a reduction $C_* \Ra D_*$. If we change the differential of one of the complexes, the following perturbation lemmas provide us with a new reduction $C'_* \Ra D'_*$ where the $C'_*$, $D'_*$  are the original chain complexes with changed (perturbed) differential.

\begin{Def}
Let $(C_*, \bo)$ be a chain complex. We call a collection of maps $\delta \colon C_* \to C_{*-1}$ \emph{perturbation} if the sum $\bo + \delta$ is also a differential on $C_*$.
\end{Def}

The following perturbation lemmas are well--known, tehy constitute one of the basic tools in homological perturbation theory. Their genesis can be traced back to \cite{eml1, brown, shih, gug} and their full proofs can be found e.g. in \cite{serg}. 

\begin{lem}[Easy Perturbation Lemma]
Let $(\alpha,\beta,\eta) \col (C_*, \bo)\Ra  (C'_*,\bo')$ be a reduction. Suppose $\delta'$ is a perturbation of differential $\bo'$. Then there is a reduction $(\alpha,\beta,\eta) \col (C, \bo + \beta \delta' \alpha) \Ra  (C',\bo' + \delta)$.
\end{lem}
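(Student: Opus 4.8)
The plan is to observe that this is a pure verification, and that the only inputs are the reduction axioms \eqref{eq:reduction} for $(\alpha,\beta,\eta)$ together with the hypothesis that $\delta'$ is a perturbation of $\bo'$. The three maps $\alpha$, $\beta$, $\eta$ are kept exactly as they are, and only the two differentials change: write $D \defeq \bo + \beta\delta'\alpha$ on $C_*$ and $D' \defeq \bo' + \delta'$ on $C'_*$ (I read the perturbed target as $\bo'+\delta'$; the $\delta$ in the statement is a typo for $\delta'$). Three of the reduction identities, namely $\eta\beta = 0$, $\alpha\eta = 0$, $\alpha\beta = \id$, together with $\eta\eta = 0$, involve neither differential, so they carry over verbatim. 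Hence everything reduces to three checks: (i) that $\alpha$ and $\beta$ are chain maps for $D$, $D'$; (ii) that $D$ is a differential, i.e.\ $D^2 = 0$; and (iii) the homotopy identity $D\eta + \eta D = \id - \beta\alpha$.

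The checks (i) and (iii) are immediate from $\alpha\beta = \id$ and the two auxiliary identities $\alpha\eta=0$, $\eta\beta=0$. For (i) I would compute $\alpha D = \alpha\bo + (\alpha\beta)\delta'\alpha = \bo'\alpha + \delta'\alpha = D'\alpha$, using that $\alpha$ was already a chain map; the identity $D\beta = \beta D'$ is symmetric, via $\bo\beta = \beta\bo'$. For (iii), expanding gives $D\eta + \eta D = (\bo\eta + \eta\bo) + \beta\delta'(\alpha\eta) + (\eta\beta)\delta'\alpha$, and the two newly created summands vanish by $\alpha\eta = 0$ and $\eta\beta = 0$, leaving precisely the original homotopy identity $\bo\eta + \eta\bo = \id - \beta\alpha$.

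The step I expect to be the crux, mild as it is, is (ii), since it is the only place where the perturbation hypothesis is actually consumed. Expanding $D^2 = (\bo + \beta\delta'\alpha)^2$ and using $\bo^2 = 0$, $\bo\beta = \beta\bo'$, $\alpha\bo = \bo'\alpha$ and $\alpha\beta = \id$, the four terms collapse to $\beta(\bo'\delta' + \delta'\bo' + \delta'^2)\alpha = \beta\bigl((\bo'+\delta')^2 - \bo'^2\bigr)\alpha$. Because $\delta'$ is a perturbation of $\bo'$, the sum $\bo'+\delta'$ is a differential, so $(\bo'+\delta')^2 = 0$, and $\bo'^2 = 0$ as well; hence $D^2 = 0$. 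With (i), (ii), (iii) and the four unchanged algebraic identities in hand, $(\alpha,\beta,\eta)$ is a reduction $(C_*, \bo + \beta\delta'\alpha) \Ra (C'_*, \bo' + \delta')$, which is the claim.
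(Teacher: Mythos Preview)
Your proof is correct and is exactly the verification the paper has in mind: the paper's own proof is the single sentence ``Check the formulas for the new reduction given in the statement,'' and you have carried out precisely that check. Your observation that the four identities $\eta\beta=0$, $\alpha\eta=0$, $\alpha\beta=\id$, $\eta\eta=0$ are unaffected, together with your computations of (i), (ii), (iii), is the intended argument spelled out in full.
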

\begin{proof}
Check the formulas for the new reduction given in the statement.
\end{proof}

\begin{lem}[Basic Perturbation Lemma]
Let $(\alpha,\beta,\eta) \col (C_*, \bo)\Ra  (C_*,\bo')$ be a reduction. Suppose $\delta$ is a perturbation of differential $\bo$ such that let for every $c \in C_*$ there is some $i \in \mathbb{N}$ satisfying $(\eta\delta)^{i} (c) = 0$. Then there is a reduction 
 $(\alpha',\beta',\eta') \col (C, \bo + \delta) \Ra  (C,\bo' + \delta')$.
\end{lem}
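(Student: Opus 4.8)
The plan is to produce explicit formulas for the perturbed data $\alpha'$, $\beta'$, $\eta'$, $\delta'$ and then verify, identity by identity, that they satisfy the conditions of \eqref{eq:reduction} (together with the chain-map conditions) for the perturbed differentials $\bo+\delta$ on the source complex $C_*$ and $\bo'+\delta'$ on the target. The first step is to notice that the nilpotency hypothesis is exactly what makes the series occurring in these formulas converge: writing $N\defeq\eta\delta$ for the degree-$0$ endomorphism of $C_*$, the assumption that each $c$ is annihilated by some power of $N$ makes $S\defeq\sum_{i\ge 0}(-N)^{i}$ a well-defined operator on $C_*$ — on every element only finitely many summands survive — and it satisfies $S(\id+N)=(\id+N)S=\id$. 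Since $(\delta\eta)^{i+1}=\delta\,(\eta\delta)^{i}\,\eta=\delta N^{i}\eta$, the operator $\delta\eta$ is pointwise nilpotent too, so $\widetilde S\defeq\sum_{i\ge 0}(-\delta\eta)^{i}$ is also defined, and one has $\delta S=\widetilde S\delta$ and $\eta\widetilde S=S\eta$. Putting $\delta_\infty\defeq\delta S=\widetilde S\delta$ (a map of degree $-1$), the candidate perturbed reduction is
\[
\delta'\defeq\alpha\,\delta_\infty\,\beta,\qquad
\alpha'\defeq\alpha-\alpha\,\delta_\infty\,\eta,\qquad
\beta'\defeq\beta-\eta\,\delta_\infty\,\beta,\qquad
\eta'\defeq\eta-\eta\,\delta_\infty\,\eta.
\]

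Next I would dispatch the purely formal part of the check. Expanding the relevant products, the three side conditions $\alpha'\eta'=0$, $\eta'\beta'=0$, $\eta'\eta'=0$ and the normalization $\alpha'\beta'=\id$ drop out immediately from the old side conditions $\alpha\eta=\eta\beta=\eta\eta=0$ together with $\alpha\beta=\id$: every summand in these four products contains one of $\alpha\eta$, $\eta\beta$, $\eta\eta$ as a consecutive block, the only exception being the term $\alpha\beta=\id$ in $\alpha'\beta'$. The actual content of the lemma is in the remaining three assertions: (i) $\delta'$ is a perturbation of $\bo'$, i.e.\ $(\bo'+\delta')^{2}=0$; (ii) the homotopy identity $(\bo+\delta)\eta'+\eta'(\bo+\delta)=\id-\beta'\alpha'$; and (iii) $\alpha'$ and $\beta'$ are chain maps, $\alpha'(\bo+\delta)=(\bo'+\delta')\alpha'$ and $(\bo+\delta)\beta'=\beta'(\bo'+\delta')$. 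For these I would repeatedly use the resolvent identities $\delta_\infty=\delta-\delta\eta\,\delta_\infty=\delta-\delta_\infty\,\eta\delta$ (equivalently $S=\id-NS=\id-SN$), the commutation rules $\delta S=\widetilde S\delta$ and $\eta\widetilde S=S\eta$, the relation $\bo\delta+\delta\bo=-\delta^{2}$ extracted from $(\bo+\delta)^{2}=0$ and $\bo^{2}=0$, and the original reduction data — in particular $\bo\eta+\eta\bo=\id-\beta\alpha$ and the fact that $\alpha,\beta$ intertwine $\bo$ with $\bo'$. (Part (i) in fact also follows from (iii) and $\alpha'\beta'=\id$, since a split epimorphism of chain complexes forces the differential on the target to square to zero.) Each of (i)--(iii) is a finite, if lengthy, rearrangement, and the details are worked out in \cite{serg}.

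The genuine obstacle is organizational rather than conceptual: in (ii) and (iii) one handles products of up to four of the operators $\bo,\delta,\alpha,\beta,\eta$ composed with the series-defined $\delta_\infty$, and the resolvent and commutation identities must be applied in the right order so that the geometric series telescopes rather than proliferates. The one place where the hypothesis is genuinely used — and the reason the statement requires pointwise nilpotency of $\eta\delta$ rather than some completeness assumption — is the well-definedness of $S$, and hence of $\delta_\infty$ and of all four perturbed maps; once that is in hand, nothing analytic is left. (One can also derive the Basic Perturbation Lemma from the Easy Perturbation Lemma by a formal-parameter/limiting argument, but the direct verification above is the version the cited references prove and the one I would reproduce.)
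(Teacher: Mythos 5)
Your proposal is correct and follows essentially the same route as the paper: both write down the standard explicit formulas built from the pointwise-convergent series $\sum(-1)^i(\eta\delta)^i$ and $\sum(-1)^i(\delta\eta)^i$ and defer the term-by-term verification to \cite{serg}. Your maps $\alpha'=\alpha-\alpha\delta_\infty\eta$, $\beta'=\beta-\eta\delta_\infty\beta$, $\eta'=\eta-\eta\delta_\infty\eta$, $\delta'=\alpha\delta_\infty\beta$ agree with the paper's $\alpha'=\alpha\psi$, $\beta'=\varphi\beta$, $\eta'=\varphi\eta$, $\delta'=\alpha\psi\delta\beta$ via the resolvent identities you state.
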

\begin{proof}
We describe the formulas for maps in the new reduction and the rest is left for the reader. We set
\[
\varphi = \sum_{i =0} ^{\infty} (-1)^i (\eta\delta)^{i}; \qquad \psi = \sum_{i =0} ^{\infty} (-1)^i (\delta\eta)^{i}
\]
By the condition in the statement, both sums are finite. The maps in the reduction are given as follows:
\[
\begin{array}{llll}
\delta' = \alpha \psi \delta \beta = \alpha  \delta \phi \beta; & \alpha' = \alpha \psi; & \beta' = \phi \beta;& \eta' = \phi \eta = \eta \psi.
\end{array} \qedhere
\]
\end{proof}

\subsection*{Effective homology for diagrams}
\begin{Def}
We call a diagram $C \colon\icat \to \Ch_+$  \emph{pointwise locally effective} if $C (i)$ is locally effective for every $i \in \icat$. If further for every every morphism $f$ in the category $\icat$ is the morhism $C(f)$ computable,  we call $C$ \emph{locally effective}.

We say that pointwise locally effective $C$ has \emph{pointwise effective homology} if for every $i\in \icat$ there is an effective chain complex $C ^{\ef} (i) $ and a strong equivalence of chain complexes $C (i) \LRa C  ^{\ef} (i)$.
\end{Def}

\begin{Def}
Let $C\colon \icat \to \Ch_{+}$. We say $C$ is \emph{cellular} if there is an indexing set $A$ such that for every $\alpha \in A$ we are given 

\begin{itemize}
\item $i_\alpha \in \icat$,
\item $c_\alpha $ is a chain in $C(i_\alpha)$,
\end{itemize}
such that the set
\[
\{f_* c_\alpha \mid \alpha \in A, f \in  \icat(i_\alpha, i) \}
\]
forms a basis for each $C(i)$.
\end{Def}

We can formulate the cellularity also in a different way: given an element $c \in C (i)$ there is a unique description of $c$ as 
\begin{equation}\label{eq:cell}
c= \sum\limits_{\alpha} {f_\alpha}_* (c_\alpha)
\end{equation}
where ${f_\alpha}_*$ is an element in $\mathbb{Z}\icat(i_\alpha, i)$ the free abelian group on the set $\icat(i_\alpha, i)$.

\begin{Def}
We call a locally effective diagram $C$ \emph{effective} if there is an algorithm that generates for given $n$ finite list of all elements $c_\alpha \in C (i_\alpha)_n$ and an algorithm computing \eqref{eq:cell} for this basis.
\end{Def}

We remark that the notion of a cellular functor $C$ is somewhat related to the notion of a functor that is free or representable  with models $\mathsf{ob}(\ccat)$ one can find more details e.g. in \cite[pp. 127]{may}. 
\begin{ex}

Let $\icat = G^{\op}$, where $G$ is a finite group thought of as a category with one object and arrows labelled by the elements of $G$. Let $C:\icat \to \Ch_{+}$. Then the cellularity means that every element $c\in C(i)$ can be described uniquely as 
\[
c= \sum\limits_{\alpha}  c_\alpha k_\alpha
\]
where $k_\alpha \in \mathbb{Z}G$. Hence, the chain complex is cellular only if $G$ acts freely.

\end{ex}

\begin{ex} \label{ex:main}
Let $\icat$ be a finite category. Then for any $i\in \icat$ there is a functor $\mathbb{Z}\icat(i,-) \colon \icat \to \mathsf{Ab}$ the free abelian group on the set $\icat(i,-)$. We think of this abelian group as a chain complex concentrated in degree $0$ and thus obtain a functor $\mathbb{Z}\icat(i,-) \colon \icat \to \Ch_+$. This diagram of chain complexes is effective.

We first show it is cellular:
The basis consists of one element only, namely $\id_i$. Given some $j \in \icat$, the elements $x \in \icat(i,j)$ form the basis $\mathbb{Z}\icat(i,-)$ and we can describe them as $x=  {x}_*(\id_i) $. The finiteness of $\icat$ now implies that $\mathbb{Z}\icat(i,-)$ is effective.
\end{ex}

To define diagrams with effective homology, we introduce reduction and strong equivalence of diagrams:

\begin{Def}
Let $C,C' \colon \icat \to \Ch_+$ be diagrams of chain complexes. A \emph{reduction} $C \Ra C'$  is a triple of natural transformations  $(\alpha,\beta,\eta)$. 

\[(\alpha,\beta,\eta)\col C\Ra C'\quad\equiv\quad\xymatrix@C=30pt{
C \ar@(ul,dl)[]_{\eta} \ar@/^/[r]^\alpha & C' \ar@/^/[l]^{\beta}
}\]
they again satisfy the conditions \eqref{eq:reduction}.

Similarly we define strong equivalence of  diagrams of chain complexes and the strong equivalences can again be composable.
We say that a locally effective diagram $C\colon \icat  \to \Ch_+$ has \emph{effective homology} if there is a strong equivalence of locally effective diagrams $C \La \widehat{C} \Ra C  ^\ef$ where $C ^\ef\colon \icat \to \Ch_+$ is an effective diagram of chain complexes. 
A diagram of simplicial sets $X \colon \icat \to \sSet$ has \emph{effective homology} if $C(X)$ has effective homology.
\end{Def}

\subsection*{Perturbation lemmas for diagrams}
We now define perturbation and give perturbation lemmas for diagrams of chain complexes:

\begin{Def}
Let $C,C'\colon \icat \to \Ch_+$. Notice that the differential $\bo$ on $C$ can be seen as a natural transformation $C \to C [1]$ satisfying $\bo\bo = 0$. Here $C[1]$ is diagram of chain complexes $C$ with all the chain complexes moved up by one dimension.
We call a collection of maps $\delta : C \to C[1] $ \emph{perturbation} if the sum $\bo + \delta$ is also a differential.
\end{Def}

We now formulate the lemmas.
\begin{lem}[Easy Perturbation Lemma] \label{lem:epl}
Let $(\alpha,\beta,\eta) \col (C, \bo)\Ra  (C',\bo')$ be a reduction of diagrams of chain complexes . Suppose $\delta'$ is a perturbation of differential $\bo'$. Then there is a reduction $(\alpha,\beta,\eta) \col (C, \bo + \beta \delta' \alpha) \Ra  (C',\bo' + \delta)$.
\end{lem}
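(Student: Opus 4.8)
The plan is to prove the Easy Perturbation Lemma for diagrams by reducing it to the Easy Perturbation Lemma for ordinary chain complexes, which is already established in the excerpt. First I would observe that a reduction of diagrams $(\alpha,\beta,\eta)\col(C,\bo)\Ra(C',\bo')$ is, by definition, a triple of natural transformations, so for every object $i\in\icat$ the components $(\alpha_i,\beta_i,\eta_i)\col(C(i),\bo_i)\Ra(C'(i),\bo'_i)$ form an ordinary reduction of chain complexes, since the identities \eqref{eq:reduction} are checked objectwise. Likewise a perturbation $\delta'\col C'\to C'[1]$ restricts to a perturbation $\delta'_i$ of $\bo'_i$ for each $i$.

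Next I would apply the (ordinary) Easy Perturbation Lemma objectwise: for every $i$ we obtain a reduction $(\alpha_i,\beta_i,\eta_i)\col(C(i),\bo_i+\beta_i\delta'_i\alpha_i)\Ra(C'(i),\bo'_i+\delta'_i)$. It then remains to check two things globally. First, that $\bo+\beta\delta'\alpha$ is genuinely a differential on the whole diagram $C$ and that $\beta\delta'\alpha$ is natural, i.e. commutes with the maps $C(f)$ for $f$ a morphism of $\icat$; this is immediate because $\alpha$, $\beta$, and $\delta'$ are all natural transformations and composition of natural transformations is natural, so $\beta\delta'\alpha\col C\to C[1]$ is a natural transformation, and it is a perturbation because it is one in each component. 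Second, that the (unchanged) triple $(\alpha,\beta,\eta)$ is still a triple of natural transformations with respect to the new differentials — but the underlying graded maps have not changed, only the differentials have, so naturality is inherited verbatim from the original reduction.

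Assembling these observations: the objectwise reductions produced above are precisely the components of a single triple of natural transformations $(\alpha,\beta,\eta)$, and since the reduction identities \eqref{eq:reduction} hold in each component, they hold as identities of natural transformations. Hence $(\alpha,\beta,\eta)\col(C,\bo+\beta\delta'\alpha)\Ra(C',\bo'+\delta')$ is a reduction of diagrams of chain complexes, as claimed.

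I do not expect any real obstacle here; the only point requiring a moment's care is the bookkeeping that "naturality of the datum" plus "the objectwise statement" genuinely glue to "a statement about natural transformations," and that the perturbed differential $\bo+\beta\delta'\alpha$ is itself natural — both of which follow formally from closure of natural transformations under composition. As in the non-diagram version, the actual verification of the five reduction identities is the routine substitution already delegated to the reader in the ordinary Easy Perturbation Lemma, so here too it suffices to note that those identities are checked objectwise and invoke the scalar case.
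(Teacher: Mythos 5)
Your proposal is correct and is essentially the paper's own argument: the paper proves both diagram perturbation lemmas by noting that the new reduction data are given by the same explicit formulas as in the classical case, hence are (sums of) composites of natural transformations and therefore natural, while the reduction identities are verified objectwise exactly as for ordinary chain complexes. (Minor note: the $\delta$ in the statement's conclusion should read $\delta'$, as you correctly wrote.)
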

\begin{lem}[Basic Perturbation Lemma] \label{lem:bpl}
Let $(\alpha,\beta,\eta) \col (C, \bo)\Ra  (C,\bo')$ be a reduction of diagrams of chain complexes. Suppose $\delta$ is a perturbation of differential $\bo$ and further  for every $i\in \icat$ and every $c \in C (i)$ there is some $k \in \mathbb{N}$ such that we get  $(\eta\delta)^{k} (c) = 0$. Then there is a reduction of diagrams of chain complexes
 $(\alpha',\beta',\eta') \col (C, \bo + \delta) \Ra  (C' ,\bo' + \delta')$.
\end{lem}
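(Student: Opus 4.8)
The plan is to reduce everything to the classical Basic Perturbation Lemma for a single chain complex, applied objectwise, and then to check that the resulting data are natural. By definition a reduction of diagrams $(\alpha,\beta,\eta)\col(C,\bo)\Ra(C,\bo')$ is a triple of natural transformations whose components at each $i\in\icat$ form a reduction of chain complexes $(\alpha_i,\beta_i,\eta_i)\col(C(i),\bo_i)\Ra(C(i),\bo'_i)$; likewise the perturbation $\delta\col C\to C[1]$ has components $\delta_i$ which are perturbations of $\bo_i$. The hypothesis is exactly the statement that, for every $i$, the pair $(\eta_i,\delta_i)$ satisfies the pointwise nilpotency assumption of the classical Basic Perturbation Lemma. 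So for each $i$ that lemma yields a reduction $(\alpha'_i,\beta'_i,\eta'_i)\col(C(i),\bo_i+\delta_i)\Ra(C(i),\bo'_i+\delta'_i)$ together with explicit formulas for $\delta'_i,\alpha'_i,\beta'_i,\eta'_i$ in terms of $\varphi_i=\sum_{r\ge 0}(-1)^r(\eta_i\delta_i)^r$ and $\psi_i=\sum_{r\ge 0}(-1)^r(\delta_i\eta_i)^r$, which are well defined since the sums are objectwise finite.

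First I would assemble $\varphi$ and $\psi$ into natural transformations $C\to C$. Since $\eta$ and $\delta$ are natural, so is the composite $\eta\delta$ and each of its powers, hence so is every finite partial sum $\sum_{r=0}^{k-1}(-1)^r(\eta\delta)^r$. To see that the objectwise-finite total sums are natural, fix a morphism $f\col i\to j$ in $\icat$ and an element $c\in C(i)$. By hypothesis there is $k$ with $(\eta_i\delta_i)^k(c)=0$, hence $(\eta_i\delta_i)^r(c)=0$ for all $r\ge k$; applying $C(f)$ and using naturality of $(\eta\delta)^r$ gives $(\eta_j\delta_j)^r\bigl(C(f)(c)\bigr)=0$ for all $r\ge k$ as well. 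Therefore both $C(f)\bigl(\varphi_i(c)\bigr)$ and $\varphi_j\bigl(C(f)(c)\bigr)$ equal $\sum_{r=0}^{k-1}(-1)^r(\eta_j\delta_j)^r\bigl(C(f)(c)\bigr)$, so $\varphi$ is natural; the identical argument applies to $\psi$.

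Given this, I would set $\delta'=\alpha\psi\delta\beta$, $\alpha'=\alpha\psi$, $\beta'=\varphi\beta$ and $\eta'=\varphi\eta=\eta\psi$: each is a composite of natural transformations, hence a natural transformation $C\to C$ (resp. $C\to C[1]$ for $\delta'$). That $\bo'+\delta'$ is again a differential and that $(\alpha',\beta',\eta')$ satisfies the reduction identities \eqref{eq:reduction} is then immediate, since each of these assertions is an equality of natural transformations that holds after evaluation at every $i\in\icat$ by the classical Basic Perturbation Lemma. This proves the lemma. If in addition one wants the new reduction to remain locally effective when $C$ is, one notes that all four maps are computed by the explicit finite formulas above, where for a given input the bound $k$ is found by iterating $\eta\delta$ until the output vanishes; the Easy Perturbation Lemma for diagrams (Lemma \ref{lem:epl}) is verified in the same objectwise manner and is even more direct, as no infinite sums occur there.

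The only point genuinely requiring care — and the one I regard as the main obstacle — is the \emph{naturality} of $\varphi$ and $\psi$: a priori the number of terms needed in the sum defining $\varphi_i(c)$ depends on both $i$ and $c$, so one must check that these pointwise truncations are compatible along the morphisms of $\icat$. The observation above, that $(\eta_i\delta_i)^k(c)=0$ forces $(\eta_j\delta_j)^k\bigl(C(f)(c)\bigr)=0$ with the same bound $k$, is precisely what makes this work; once it is in place, the rest is the classical one-object argument carried out simultaneously in every component.
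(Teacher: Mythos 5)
Your proof is correct and follows essentially the same route as the paper, which simply observes that the classical formulas $\delta'=\alpha\psi\delta\beta$, $\alpha'=\alpha\psi$, $\beta'=\varphi\beta$, $\eta'=\varphi\eta$ are sums of composites of natural transformations and therefore define the required reduction of diagrams. Your explicit verification that the objectwise-finite sums $\varphi$ and $\psi$ are themselves natural (via the observation that $(\eta_i\delta_i)^k(c)=0$ forces $(\eta_j\delta_j)^k(C(f)(c))=0$) is exactly the detail the paper leaves implicit, and it is handled correctly.
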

Both lemmas can be proven in the same way as classical perturbation lemmas.
 We have concrete description of the new reductions and they are given as sums of compositions of $\alpha, \beta, \eta, \bo, \delta$. Therefore it is enough to check the previously described formulas.

 \begin{lem} \label{prop:product}
 \
\begin{enumerate}
\item Let $X,Y \colon \icat \to \sSet$ be diagrams of simplicial sets with pointwise effective homology. Then the diagram $(X \times Y) \colon \icat \to \sSet$ has pointwise effective homology.
\item Let $X\colon \icat \to \sSet$,  $Y\colon \jcat \to \sSet$ be diagrams with effective homology. Then $X \mathbin{\widehat{\times}} Y \colon {\icat} \times {\jcat} \to \sSet$ has effective homology.
\item Let $C \colon \icat \to \Ch_+$,  $C'\colon \icat \to \Ch_+$ be diagrams of chain complexes with effective homology. Then $C \oplus C'$ has effective homology.
\end{enumerate}
\end{lem}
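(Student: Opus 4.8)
The plan is to reduce all three statements to the corresponding non-diagram facts of Lemma~\ref{lem:results}, using that reductions and strong equivalences of chain complexes upgrade to reductions and strong equivalences of diagrams by naturality of the constructions involved. Part~(1) is immediate: for each $i\in\icat$ the simplicial set $(X\times Y)(i)=X(i)\times Y(i)$ has effective homology by Lemma~\ref{lem:results}(2), while $X\times Y$ is pointwise locally effective because $X$ and $Y$ are, and each structure map $(X\times Y)(f)=X(f)\times Y(f)$ is computable; hence $X\times Y$ has pointwise effective homology.

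For part~(3), write the given data as strong equivalences $C \La \widehat C \Ra C^\ef$ and $C' \La \widehat{C'} \Ra (C')^\ef$ of locally effective diagrams, with $C^\ef$ and $(C')^\ef$ effective. Form $\widehat C\oplus\widehat{C'}$ and $C^\ef\oplus (C')^\ef$; the direct sum of two reductions is again a reduction, since the identities in \eqref{eq:reduction} are checked in each summand, and the direct sum of natural transformations is natural, so that $C\oplus C' \La \widehat C\oplus\widehat{C'}\Ra C^\ef\oplus (C')^\ef$ is a strong equivalence of locally effective diagrams. It then suffices to check that $C^\ef\oplus (C')^\ef$ is effective: its indexing set is the disjoint union of those of $C^\ef$ and $(C')^\ef$, with the evident basis chains in the two summands, which gives cellularity, and the algorithms generating a finite basis in each degree and computing the expansion \eqref{eq:cell} are obtained by running the corresponding algorithms for $C^\ef$ and $(C')^\ef$ separately on each summand. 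This is the diagram analogue of the $\oplus$ part of Lemma~\ref{lem:results}(1).

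For part~(2), recall that $(X\mathbin{\widehat{\times}} Y)(i,j)=X(i)\times Y(j)$; note this is a locally effective diagram over $\icat\times\jcat$, since each $X(i)\times Y(j)$ is locally effective and each structure map $X(f)\times Y(g)$ is computable. First, the Eilenberg--Zilber reduction $C_*(X(i)\times Y(j))\Ra C_*(X(i))\otimes C_*(Y(j))$ is natural in each simplicial-set argument (the Alexander--Whitney map, the shuffle map, and the contracting homotopy are all natural), hence natural in $(i,j)\in\icat\times\jcat$, so it assembles into a reduction of diagrams over $\icat\times\jcat$
\[ C(X\mathbin{\widehat{\times}} Y)\ \Ra\ C(X)\boxtimes C(Y), \]
where $(C(X)\boxtimes C(Y))(i,j)\defeq C_*(X(i))\otimes C_*(Y(j))$ with structure maps $X(f)_*\otimes Y(g)_*$. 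Next, tensoring the strong equivalences witnessing the effective homology of $C(X)$ and of $C(Y)$ --- using that the tensor product of reductions of chain complexes is a reduction (\cite{serg}, Proposition~61, as in Lemma~\ref{lem:results}(1)) and that tensor products of natural transformations are natural --- gives a strong equivalence of locally effective diagrams
\[ C(X)\boxtimes C(Y)\ \La\ \widehat{C(X)}\boxtimes\widehat{C(Y)}\ \Ra\ C(X)^\ef\boxtimes C(Y)^\ef. \]
Finally, $C(X)^\ef\boxtimes C(Y)^\ef$ is an effective diagram over $\icat\times\jcat$: taking indexing set $A\times A'$, objects $(i_\alpha,j_{\alpha'})$ and chains $c_\alpha\otimes c'_{\alpha'}$, cellularity holds because $(f,g)_*(c_\alpha\otimes c'_{\alpha'})=f_*(c_\alpha)\otimes g_*(c'_{\alpha'})$ and a tensor product of bases is a basis of the tensor product, while the generating and expansion algorithms are assembled from those for the two factors by ranging over pairs of complementary degrees. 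Composing the reduction of the first step, viewed as a strong equivalence, with the strong equivalence of the second step exhibits a strong equivalence of $C(X\mathbin{\widehat{\times}} Y)$ with the effective diagram $C(X)^\ef\boxtimes C(Y)^\ef$, so $X\mathbin{\widehat{\times}} Y$ has effective homology.

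I expect the only genuine work to lie in part~(2): verifying that the external tensor product of effective diagrams is again effective --- in particular the cellularity bookkeeping over the product category $\icat\times\jcat$ --- and that the Eilenberg--Zilber reduction together with the tensor product of reductions are natural enough to pass to reductions of diagrams. Parts~(1) and~(3), and the remaining verifications in part~(2), are direct transcriptions of the non-diagram arguments.
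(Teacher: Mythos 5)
Your proposal is correct and takes essentially the same route as the paper: part (1) is reduced pointwise to Lemma~\ref{lem:results}(2), part (2) uses the naturality of the Eilenberg--Zilber reduction together with tensoring the strong equivalences and the observation that the basis of $C^\ef(X)\otimes C^\ef(Y)$ is $\{(f,g)_*(c_\alpha\otimes c'_{\alpha'})\}$ indexed over $\icat\times\jcat$, and part (3) is the evident direct-sum argument (which the paper simply calls trivial). Your write-up just supplies more of the bookkeeping than the paper does.
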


\begin{proof}
It is enough to show that for each $i \in \icat$, $C((X \times Y) (i)) $ has effective homology and this is proven by the second part of Lemma \ref{lem:results}.

In the second part we use the functoriality of Eilenberg--Zilber reduction (see \cite{eml2}, Theorem 2.1a). The diagram $C (X \times Y)$ is strongly equivalent to diagram $C^\ef (X) \otimes C ^\ef (Y)$ and it remains to show the latter is effective.

Let $x_\alpha$ be the finite basis for $C^\ef (X) $ and $y_\beta$ be the basis of $C ^\ef (Y)$. It is well-known that the basis of tensor product is formed by tensor products of basis elements, so the basis of $C^\ef (X) \otimes C ^\ef (Y)$ is generated by the set
\[
\{f_* x_\alpha \otimes g_* y_{\beta}\mid f \in  \icat(i_\alpha, i), g\in \jcat(j_{\beta, i}) \}  = 
\{(f , g)_* x_\alpha \otimes  y_{\beta}\mid(f, g) \in  {\icat} \mathbin{\widehat{\times}} {\jcat} ((i_\alpha, j_{\beta}),  (i, j)) \} 
\]
The last part is trivial.
\end{proof}

\begin{cor}\label{cor:otimes}
Let $C \colon \icat \to \Ch_+$ be a diagram with effective homology and let $C'$ be a chain complex with effective homology. Then $C' \otimes C \colon \icat \to \Ch_+$ has effective homology.
\end{cor}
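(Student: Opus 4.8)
The plan is to treat $C'$ as a constant diagram on $\icat$, tensor together the strong equivalences witnessing effective homology of $C'$ and of $C$, and then check that the resulting target is an effective diagram. Note that passing to pointwise data is not enough here: Lemma \ref{lem:results}(1) shows that each $(C'\otimes C)(i)=C'\otimes C(i)$ has effective homology, but this only yields pointwise effective homology of $C'\otimes C$, not effective homology as a diagram, so the cellular structure must be tracked globally.

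Since $C'$ has effective homology, fix a strong equivalence of chain complexes $C'\La\widehat{C'}\Ra C'^{\ef}$ with $C'^{\ef}$ effective. Regarding $C'$, $\widehat{C'}$ and $C'^{\ef}$ as constant diagrams $\icat\to\Ch_+$, with all structure maps the identity, turns this into a strong equivalence of locally effective diagrams (the reduction identities hold objectwise and naturality is trivial). Similarly fix a strong equivalence of locally effective diagrams $C\La\widehat{C}\Ra C^{\ef}$ with $C^{\ef}$ effective. Now apply, objectwise in $\icat$, the fact that the tensor product of two reductions of chain complexes is again a reduction, with structure maps given by explicit formulas built from the structure maps of the two factors (Lemma \ref{lem:results}, cf. \cite{serg}, Proposition 61). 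Applying this to $\widehat{C'}\Ra C'$ together with $\widehat{C}\Ra C$, and to $\widehat{C'}\Ra C'^{\ef}$ together with $\widehat{C}\Ra C^{\ef}$, produces two candidate reductions of diagrams
\[
\widehat{C'}\otimes\widehat{C}\ \Ra\ C'\otimes C,\qquad \widehat{C'}\otimes\widehat{C}\ \Ra\ C'^{\ef}\otimes C^{\ef}.
\]
One must verify that their structure maps are natural in $\icat$; this is immediate because the components coming from the $C$-factor are natural by hypothesis, those coming from the constant $C'$-factor are trivially natural, and the formulas are obtained from these by tensoring and composing. Computability is inherited from the factors, and all diagrams in sight are locally effective. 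Hence $C'\otimes C$ is strongly equivalent, as a diagram, to $C'^{\ef}\otimes C^{\ef}$.

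It remains to show $C'^{\ef}\otimes C^{\ef}$ is an effective diagram, which is the argument of Lemma \ref{prop:product}(2) with the first indexing category taken to be trivial. Let $\{x_\gamma\}$ be the finite (in each degree) basis of $C'^{\ef}$, and let $\{c_\beta\in C^{\ef}(i_\beta)\}$ with the expansion \eqref{eq:cell} exhibit $C^{\ef}$ as a cellular effective diagram. Then the elements $x_\gamma\otimes c_\beta\in(C'^{\ef}\otimes C^{\ef})(i_\beta)$ form a cellular basis: for $f\in\icat(i_\beta,i)$ one has $f_*(x_\gamma\otimes c_\beta)=x_\gamma\otimes f_*c_\beta$, and since $\{x_\gamma\}$ is a basis of $C'^{\ef}$ and $\{f_*c_\beta\}$ is a basis of $C^{\ef}(i)$, the set $\{x_\gamma\otimes f_*c_\beta\}$ is a basis of $(C'^{\ef}\otimes C^{\ef})(i)$. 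The unique expansion \eqref{eq:cell} of a given element is computed by combining the basis algorithm for $C'^{\ef}$ with the expansion algorithm for $C^{\ef}$, and finiteness of the basis in each degree follows from finiteness for the two factors. Combined with the previous step, this proves $C'\otimes C$ has effective homology.

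The bookkeeping with tensor products of reductions and the naturality check are routine once one recalls the standard formulas; I expect the main (still modest) point to be the explicit identification of the cellular basis of $C'^{\ef}\otimes C^{\ef}$ and the verification that the combined expansion algorithm indeed produces \eqref{eq:cell} uniquely.
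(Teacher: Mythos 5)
Your proof is correct and follows essentially the same route as the paper: regard $C'$ as a trivial (constant/one-object) diagram, tensor the two strong equivalences to obtain $C'\otimes C \LRa C'^{\ef}\otimes C^{\ef}$, and conclude by the effectiveness of the tensor product of effective diagrams, which is exactly Lemma \ref{prop:product}(2) specialized to a trivial first index category. The paper simply cites that lemma where you spell out the cellular basis $x_\gamma\otimes f_*c_\beta$ explicitly.
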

\begin{proof}
We can see $C'$ as a diagram $C' \colon * \to \Ch_+$. The diagram $C' \otimes C$ is strongly equivalent to some  ${C'} ^\ef \otimes {C} ^\ef \colon \icat \to \Ch_+$. Lemma \ref{prop:product} (2) then gives the result.
\end{proof}

In what follows we are going to apply a general lemma about filtered diagrams of chain complexes.
Let  $C \colon \icat \to Ch_+$. We introduce a filtration $F$ on diagram $C$ of chain complexes: 
\[
0 = F_{-1} C \subseteq F_0 C  \subseteq F_1 C  \subseteq \cdots
\]
such that $C = \bigcup_k F_k C$. We further assume that each $F_k C$ is a cellular subcomplex i.e. it is generated by a subset of the given basis of $C$ and that the filtration is locally finite i.e. for each $n$ we have $F_k C_n = C_n$ for $k \gg 0$.

\begin{lem}[\cite{aslep}, Lemma 7.3] \label{lem:main}
Let $C$ be a diagram of chain complexes with filtration $F$ satisfying properties as above. If each filtration quotient $G_k C = F_k C / F_{k-1} C$ is a diagram with effective homology  then so is $C$.
\end{lem}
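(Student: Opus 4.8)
\textbf{Proof plan for Lemma \ref{lem:main}.}
The strategy is to induct on the filtration degree, assembling strong equivalences for the finite stages $F_k C$ from the given strong equivalences for the quotients $G_k C$, and then pass to the colimit using local finiteness. First I would set up the inductive hypothesis: for each $k$, the diagram $F_k C$ has effective homology, witnessed by a strong equivalence $F_k C \La \widehat{F_k C} \Ra (F_k C)^\ef$ of locally effective diagrams with $(F_k C)^\ef$ effective. The base case $k=-1$ is trivial since $F_{-1} C = 0$. For the inductive step, I would consider the short exact sequence of diagrams of chain complexes
\[
0 \to F_{k-1} C \to F_k C \to G_k C \to 0,
\]
which is degreewise split because $F_{k-1} C$ is a cellular subcomplex of the cellular diagram $F_k C$ (it is spanned by a subset of the basis), so there is a natural splitting on the level of graded groups. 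Hence $F_k C$, as a graded diagram, is naturally isomorphic to $F_{k-1} C \oplus G_k C$, and its differential is the direct-sum differential perturbed by a connecting term $\delta$ that raises the $F_{k-1}$-component; i.e. $F_k C = (F_{k-1} C \oplus G_k C, \bo_\oplus + \delta)$ for a perturbation $\delta$.

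Next I would use Lemma \ref{prop:product}(3): since $F_{k-1} C$ and $G_k C$ both have effective homology (the former by induction, the latter by hypothesis), the diagram $F_{k-1} C \oplus G_k C$ with the unperturbed differential has effective homology, say via $F_{k-1} C \oplus G_k C \La \widehat{D} \Ra D^\ef$. Now I would transport the perturbation $\delta$ across both reductions of this strong equivalence using the Basic Perturbation Lemma for diagrams (Lemma \ref{lem:bpl}). The nilpotence hypothesis $(\eta\delta)^j(c) = 0$ required there holds because $\delta$ strictly decreases the filtration degree of the $G_k$-summand into the $F_{k-1}$-summand while $\eta$ preserves the splitting in a filtration-compatible way — more precisely, $\delta$ shifts weight from the top filtration piece down, and since we are only dealing with the two-step filtration $F_{k-1} C \subseteq F_k C$, a bounded number of iterations suffices (pointwise, degreewise). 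Applying Lemma \ref{lem:bpl} to both reductions in the span yields a strong equivalence $F_k C \La \widehat{F_k C} \Ra (F_k C)^\ef$, where $(F_k C)^\ef$ is the perturbed effective diagram $D^\ef$; it remains effective since the perturbation is computable and does not change the underlying graded basis, so the algorithms generating bases and expressing \eqref{eq:cell} carry over. All maps involved are natural transformations and computable, so everything stays within locally effective diagrams.

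Finally, I would pass to the colimit. By local finiteness, for each $n$ there is $k_0(n)$ with $F_k C_n = C_n$ for all $k \ge k_0(n)$, and the same stabilization holds for the constructed $(F_k C)^\ef$ and $\widehat{F_k C}$ in each fixed degree (the perturbation step in degree $n$ only involves degrees $n$ and $n-1$). Therefore the sequential colimits $C = \colim_k F_k C$, $\widehat{C} = \colim_k \widehat{F_k C}$, $C^\ef = \colim_k (F_k C)^\ef$ are well-defined locally effective diagrams — each is degreewise eventually constant, hence computable — and the strong equivalences are compatible with the filtration maps, so they assemble into a strong equivalence $C \La \widehat{C} \Ra C^\ef$ with $C^\ef$ effective. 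This shows $C$ has effective homology. The main obstacle is the bookkeeping in the inductive step: verifying that the connecting perturbation $\delta$ satisfies the pointwise nilpotence condition of Lemma \ref{lem:bpl} after transporting along the reductions, and checking that effectivity (the existence of the basis-generating and coordinate algorithms) is preserved under this perturbation; the colimit step is essentially formal given local finiteness.
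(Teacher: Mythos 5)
Your proof is correct and rests on the same two ingredients as the paper's own argument: direct sums of the given strong equivalences for the $G_k$ (Lemma \ref{prop:product}(3) together with local finiteness of the filtration), and the perturbation lemmas, with nilpotence coming from the fact that the connecting part of the differential strictly decreases filtration degree while the homotopy operators preserve it. The only differences are organizational and cosmetic --- the paper perturbs the full associated graded $G=\bigoplus_k G_k$ in one step instead of inducting on $k$ and passing to a colimit, and on the leg of the strong equivalence pointing toward $F_{k-1}C\oplus G_kC$ the perturbation sits on the codomain of that reduction, so strictly you need the Easy Perturbation Lemma \ref{lem:epl} there before applying the Basic Perturbation Lemma \ref{lem:bpl} to the other leg, exactly as the paper does.
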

\begin{proof}
We define $G = \bigoplus\limits_{k \geq 0} G_k$. The sum is not finite, but it is \emph{locally finite}: By the properties of $F$, we get that $G_k C_n = 0$ for $k \gg 0$. Thus for each $n$, we get a direct sum of diagrams with effective homology $G_k C_n \colon \icat \to \Ch_+$ and it follows that $G$ has effective homology. 

The diagram $C$ differs form $G$ only by a perturbation of its differential. This perturbation decreases the filtration degree. If we take a direct sum of given strong equivalences $G_k \La \widehat{G_k}\Ra G_k ^\ef$, we obtain a strong equivalence $G  \La \widehat{G}\Ra G ^\ef$. All the chain complexes are equipped with a filtration degree. Since the perturbation on $G$ decreases the filtration degree, while the homotopy operator preserves it, we can apply the perturbation lemmas \ref{lem:epl}, \ref{lem:bpl} to obtain a strong equivalence $C  \La \widehat{C}\Ra C ^\ef$. \end{proof}
\section{Proofs of main results}

We remark, that both homotopy colimit $\hocolim X$ and cofibrant replacement $X ^\cofib$ of a diagram $X \colon \icat \to \sSet$ can be seen as cases of \emph{homotopy left Kan extension} $\holan_p X$ \cite{bous, hirsch}, where $p$ is a functor $\icat \to \jcat$ . We picture the situation in the following diagram:
\[
\xymatrix{
{\icat} \ar[rr]^{X} \ar[dr]_p & & \sSet \\
{ } & \jcat \ar@{.>}[ur]_{\holan_p X}& {}
}
\]
For the homotopy colimit one chooses $\jcat = *$ and $p$ the unique functor to the terminal category and for the cofibrant replacement we set $\jcat = \icat$ and $p = \id$ \cite{bous, alaska}.
We will make use of the following Bousfield--Kan formula for $\holan_p X$ (see \cite{bous, isac, dugger}):
\begin{equation}\label{eq:holan}
(\holan_p X) (j) = \bigsqcup_{n} \bigsqcup_{i_0, \cdots , i_n} \stdsimp{n} \times X(i_0) \times \icat (i_0, i_1) \times \cdots \times \icat (i_{n-1}, i_n) \times \jcat( p(i_n), j ) / {\sim}
\end{equation}
here the relation $\sim$ is given as 

\begin{align*}
(d^k t, x, f_1, f_2, \ldots f_n, g) \sim {} & (t, x, f_1, f_2, \ldots ,f_{k+1}  f_{k}  , \ldots, f_{n-1}, f_n, g),  & & 0< k< n; \\
(d^k t, x, f_1, f_2, \ldots f_{n}, g) \sim {} & (t, x, f_1, f_2, \ldots, f_{n-2}, f_{n-1}, g p(f_n)), & & k= {n};\\
(d^k t, x, f_1, f_2, \ldots f_{n}, g)\sim {} & (t, f_1(x), f_2,  \ldots , f_{n-1}, f_{n}, g),  &  &k= 0.\\
\end{align*}

For the choices of $\jcat, p$ we get corresponding formulae for homotopy colimit and cofibrant replacement namely

\[
\hocolim X = \bigsqcup_{n} \bigsqcup_{i_0, \cdots , i_n} \stdsimp{n} \times X(i_0) \times \icat (i_0, i_1) \times \cdots \times \icat (i_{n-1}, i_n)/ {\sim} 
\]

\[
X^{\cofib} (-) = \bigsqcup_{n} \bigsqcup_{i_0, \cdots , i_n} \stdsimp{n} \times X(i_0) \times \icat (i_0, i_1) \times \cdots \times \icat (i_{n-1}, i_n) \times \icat (i_n,  - ) / {\sim} 
\]
with the obvious specified relations.

We now formulate theorem regarding effective homology of the Bousfield--Kan model of $\holan_p X$ and we obtain both Proposition \ref{prop:main} and \ref{prop:submain} as straightforward corollaries.

\begin{thm}\label{thm:main}
Let $X \colon \icat \to \sSet$ be a pointwise effective diagram, $p\colon \icat \to \jcat$ a functor between finite categories. Then $\holan_p X \colon \jcat \to \sSet$ is diagram with effective homology.
\end{thm}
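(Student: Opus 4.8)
The strategy is to build the effective homology of $\holan_p X$ by filtering the Bousfield--Kan formula \eqref{eq:holan} by simplicial degree $n$ and applying the general filtered-diagram machinery of Lemma \ref{lem:main}. First I would rewrite the chain complex $C(\holan_p X)$ as the (normalized) realization of the simplicial object $n \mapsto \bigsqcup_{i_0,\dots,i_n} X(i_0) \times \icat(i_0,i_1)\times\cdots\times\icat(i_{n-1},i_n)\times\jcat(p(i_n),-)$, so that by the Eilenberg--Zilber theorem and the normalization theorem $C(\holan_p X)(j)$ is strongly equivalent to the total complex of a bicomplex whose column in degree $n$ is a direct sum, indexed by chains $i_0\to\cdots\to i_n$ in $\icat$ and by $\jcat(p(i_n),j)$, of $C(X(i_0))$. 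Equivalently, one introduces on $C(\holan_p X)$ the filtration $F_k$ generated by all basis elements coming from simplices of simplicial degree $\le k$; this is a cellular, locally finite filtration in the sense required before Lemma \ref{lem:main}, because in each homological degree only finitely many simplicial degrees $n$ contribute (the factor $\stdsimp{n}$ has no nondegenerate simplices below dimension $n$, and $\icat$, $\jcat$ are finite).

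The key point is then to identify the filtration quotients $G_k C(\holan_p X)$ and show each is a diagram (over $\jcat$) with effective homology. The quotient $G_k$ is, up to the normalization/Eilenberg--Zilber equivalences, a finite direct sum over the finitely many chains $i_0\xrightarrow{f_1}\cdots\xrightarrow{f_n} i_n$ of length $n\le k$ (with the appropriate degeneracy identifications from $\sim$ absorbed into the normalized complex) of diagrams of the form $j \mapsto C(X(i_0)) \otimes \mathbb{Z}\jcat(p(i_n),j)$, with the $\jcat$-action only on the last tensor factor. Now $\mathbb{Z}\jcat(p(i_n),-)\colon \jcat\to\Ch_+$ is effective by Example \ref{ex:main}, and $C(X(i_0))$ is a chain complex with effective homology by hypothesis (viewed as a constant diagram, i.e.\ a diagram over $*$). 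Hence by Corollary \ref{cor:otimes} the diagram $C(X(i_0))\otimes \mathbb{Z}\jcat(p(i_n),-)$ has effective homology over $\jcat$, and a finite direct sum of such has effective homology by Lemma \ref{prop:product}(3). Thus every $G_k$ has effective homology.

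Finally, Lemma \ref{lem:main} applies verbatim: $C(\holan_p X)$ is a diagram of chain complexes over $\jcat$ with a cellular, locally finite filtration whose associated graded pieces all have effective homology, so $C(\holan_p X)$ itself has effective homology, which is exactly the assertion that $\holan_p X\colon \jcat\to\sSet$ has effective homology as a diagram. Propositions \ref{prop:main} and \ref{prop:submain} follow by specializing to $\jcat=*$, $p$ the terminal functor, and to $\jcat=\icat$, $p=\id$, respectively (in the latter case one checks separately, as in \cite{bous, alaska}, that the resulting diagram is indeed cofibrant in the projective model structure, which is a formal property of the Bousfield--Kan construction and independent of the effectivity bookkeeping).

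**Main obstacle.** The routine-but-delicate part is the bookkeeping that turns the quotient-by-$\sim$ description \eqref{eq:holan} into a clean direct sum after normalization: one must check that the degeneracy relations in $\sim$ (the face maps $d^k$ for $0<k<n$, $k=n$, $k=0$) exactly match the simplicial structure whose normalized complex splits as the stated direct sum, so that the ``perturbation decreases filtration degree'' hypothesis of Lemma \ref{lem:main} is met and the associated graded really is the simple $\otimes$-of-basics described above, with no surviving cross terms between different chains in $\icat$. Once that identification is in place, effectivity is purely a matter of quoting Example \ref{ex:main}, Corollary \ref{cor:otimes}, Lemma \ref{prop:product}(3), and Lemma \ref{lem:main}.
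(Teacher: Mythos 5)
Your proposal follows essentially the same route as the paper: your filtration by simplicial degree of the Bousfield--Kan formula is exactly the paper's skeletal filtration pulled back along the projection $q\colon \holan_p X\to N\icat$ to the nerve, the associated graded is identified via Eilenberg--Zilber with a finite direct sum over nondegenerate chains of $s^kC(X(i_0))\otimes\mathbb{Z}\jcat(p(i_k),-)$, and the same supporting results (Example \ref{ex:main}, Corollary \ref{cor:otimes}, Lemma \ref{prop:product}, Lemma \ref{lem:main}) are invoked. The only slip is that the quotient $G_k=F_k/F_{k-1}$ should be indexed by nondegenerate chains of length exactly $k$ rather than $n\le k$; with that correction your argument coincides with the paper's.
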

\begin{proof}
For any category $\icat$ there is a simplicial set $N \icat$, the nerve of $\icat$. The simplicial set $N \icat$ can be seen as a homotopy colimit of the diagram consisting of points. Then there is a projection  $q \colon \holan_p X \to N \icat $ given as a projection onto
\[
\bigsqcup_{n} \bigsqcup_{i_0, \cdots , i_n} \stdsimp{n} \times \icat (i_0, i_1) \times \cdots \times \icat (i_{n-1}, i_n)/ {\sim}
\]
and we define the skeleton of $\holan_p X$:
\[
\sk_{k} \holan_p X = q^{-1} (sk_{k} N \icat).
\]

Let $C \colon \sSet \to \Ch_+$ denote the standard chain complex functor. We want to use Lemma \ref{lem:main} prove that the diagram $C(\holan_p X) \colon \icat \to \Ch_+$ has effective homology. Therefore, we first have to introduce a filtration $F$ on the diagram of chain complexes $C(\holan_p X)$. We define $F$ as follows:
\[
F_k C (\holan_p X ) = C ( \sk_{k} \holan_p X)
\]
Denoting $G_k = F_k / F_{k-1}$, we get
\[
G_k (C (\holan_p X)) = \bigoplus\limits_{\substack{i_0\to \cdots \to i_k \\ \text{ nondeg.} }}  C (\stdsimp{k} \times X(i_0)  \times \jcat(p(i_k), -) , \bo \stdsimp{k} \times X(i_0)  \times \jcat(p(i_k), -)) . \\
\]
The sum is taken over chains of morphisms in $\icat$ that do not contain identity as those are canceled out when computing $G_k = F_k/ F_{k-1}$. By the finiteness of $\icat$, for each $k$ there number nondegenerate chains of morphisms of length $k$ is finite, so the sum is finite.

Using the Eilenberg--Zilber reduction we get that $G_k (C (\holan_p X))$ is strongly equivalent to 
\begin{equation} \label{eq:formula}
 \bigoplus\limits_{\substack{i_0\to \cdots \to i_k \\ \text{ nondeg.}}}  C (\stdsimp{k}, \bo \stdsimp{k}) \otimes C(X(i_0))  \otimes \mathbb{Z}\jcat (p(i_k), -) \cong  \bigoplus\limits_{\substack{i_0\to \cdots \to i_k \\ \text {nondeg.}}} s^kC(X(i_0))  \otimes \mathbb{Z}\jcat (p(i_k), -)
\end{equation}
where $s$ denotes the suspension. To finish the proof, it remains to show that under the assumptions of Theorem \ref{thm:main} the diagrams $G_k$ have effective homology.

In Example \ref{ex:main}, we have shown that $\mathbb{Z}\jcat (p(i_k), -)$ is effective diagram of chain complexes. Therefore it has effective homology. Further $s^k C(X(i_0))$ is chain complex with effective homology. Using Corollary \ref{cor:otimes} we get that $s^k C(X(i_0))  \otimes \mathbb{Z}\jcat (p(i_k), -)$ is diagram of chain complexes with effective homology. As $G_k (C (\holan_p X))$ is strongly equivalent to a finite direct sum of chain complexes with effective homology, it has effective homology. Now we can apply Lemma \ref{lem:main} to complete the proof.
\end{proof}

\section{Application} \label{sec:apl}
In paper \cite{cmk} the authors presented algorithm computing the set of homotopy classes of maps $[X,Y]$\footnote{We remark that all through this section, we are using the standard notation from  model categories, so by $[X,Y]$ we in fact mean $[X^\cofib, Y^\mathsf{fib}]$.} where $X,Y$ are simplicial sets satisfying certain conditions. One of the steps in their construction based on building a Postnikov tower for $Y$ was to compute $[X, K(\pi, n)]$ where $X$ is a simplicial set with effective homology and $K(\pi, n)$ is a simplicial model for the Eilenberg--MacLane space with \emph{fully effective abelian} group $\pi$. In paper \cite{aslep} this result was extended to computing the homotopy classes of equivariant maps $[X,Y]_G$ where $G$ is a finite group with free actions on $X$ and $Y$. Their construction also involved computing $[X, K(\pi, n)]_G$ where $\pi$ is an abelian group with an action of $G$.

In this section we generalize the computation of  $[X, K(\pi, n)]$ for $X$ and  $K(\pi, n)$ being diagrams of simplicial sets. As a corollary we will further get an algorithm computing the cohomology operations  $[K_G(\pi, n), K_G(\rho, k)]_G$ where $G$ is a finite group that acts on Eilenberg--MacLane $G$-simplicial sets $K_G(\pi, n), K_G(\rho, k)$. this can be considered as a generalization of some constructions in \cite{eml1} and \cite{eml2}

We again assume the projective model structure on the category of functors $\icat \to \sSet$. Given a diagram of Abelian groups $\pi \colon \icat \to \Ab$ and a diagram of simplicial sets $X \colon \icat \to \sSet$, we define the cochain complex 
$C^* (X;\pi) = \Hom(C_* X, \pi)$. A group $H^n(X; \pi)$ the $n$th \emph{cohomology group} of $X$ with coefficients in $\pi$ is then given by the homology of $C^* (X;\pi)$.

Further, $\pi$ induces an \emph{Eilenberg--MacLane object} $K(\pi, n)$ (see \cite{dwykan}), which is in fact a diagram of Eilenberg--MacLane spaces $K(\pi (i), n)$. If not stated otherwise, by $K(\pi, n)$ we will always mean the following model which is due to \cite[Theorem 23.9]{may}:
\[
K(\pi, n)_q = Z^n (\stdsimp{q}, \pi) .
\] 
where $\stdsimp{q} \in \sSet$ is seen as a trivial diagram $\icat \to \sSet$. The following proposition uses the notion of fully effective abelian group. We give the proper definition later, for now we only remark that a fully effective abelian group $A$ is a computable description of all generators and relations of $A$.

\begin{prop} \label{prop:apl}
Let $\icat$  be a finite category and $X\colon \icat \to \sSet$ a functor. Suppose that  $X(i)$ has effective homology for all $i\in \icat$ and that all maps in the diagram $X$ are computable. Let $\pi\colon \icat \to \Ab$ be a diagram of fully effective abelian groups. Then there is an algorithm which computes $[X, K(\pi, n)]$.
\end{prop}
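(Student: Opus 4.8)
The plan is to reduce the problem, via the cofibrant replacement of Proposition~\ref{prop:submain}, to computing the cohomology of a diagram with effective homology, and then to extract that cohomology from an explicit effective cochain complex.

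First I would observe that $K(\pi,n)$ is pointwise a simplicial abelian group, hence pointwise a Kan complex, hence fibrant in the projective model structure; so, with the convention of the footnote, $[X,K(\pi,n)]\cong[X^{\cofib},K(\pi,n)]$, where $X^{\cofib}$ is the cofibrant replacement furnished by Proposition~\ref{prop:submain}. Because $X^{\cofib}$ is cofibrant and $K(\pi,n)$ is the model $K(\pi,n)_q=Z^n(\stdsimp{q},\pi)$, the standard representability computation (cf.\ \cite[Theorem~23.9]{may} and \cite[Section~6]{alaska}) identifies $[X^{\cofib},K(\pi,n)]$ with the cohomology group $H^n(X^{\cofib};\pi)=H^n\!\big(\Hom(C_*(X^{\cofib}),\pi)\big)$, the homology of the cochain complex of natural transformations $C_*(X^{\cofib})\to\pi$. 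So it remains to compute this group algorithmically. I expect this identification to be the main conceptual point: it relies on the fact that for a \emph{cofibrant} diagram the naive cochain complex $\Hom(C_*(X^{\cofib}),\pi)$ already computes the derived mapping space, and supplying such a cofibrant $X^{\cofib}$ is precisely what Proposition~\ref{prop:submain} was arranged to do.

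Next, by Proposition~\ref{prop:submain} the diagram $X^{\cofib}$ has effective homology, so there is a strong equivalence of diagrams of chain complexes $C_*(X^{\cofib})\La\widehat{C}\Ra C^{\ef}$ with $C^{\ef}\colon\icat\to\Ch_+$ effective. I would then note that $\Hom(-,\pi)$ carries a reduction of diagrams to a reduction of cochain complexes: a reduction $(\alpha,\beta,\eta)$ is a triple of natural transformations which is pointwise a reduction, so precomposition yields the required maps between the complexes of natural transformations and the identities \eqref{eq:reduction} transfer directly (for instance $\Hom(\beta,\pi)\Hom(\alpha,\pi)=\Hom(\alpha\beta,\pi)=\id$). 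Applying $\Hom(-,\pi)$ to the span above thus gives a strong equivalence $\Hom(C_*(X^{\cofib}),\pi)\La\Hom(\widehat C,\pi)\Ra\Hom(C^{\ef},\pi)$, whence $H^n(X^{\cofib};\pi)\cong H^n\!\big(\Hom(C^{\ef},\pi)\big)$, and I have reduced to computing the cohomology of $\Hom(C^{\ef},\pi)$.

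Finally I would check that $\Hom(C^{\ef},\pi)$ is an effective cochain complex. Since $C^{\ef}$ is cellular, in homological degree $m$ the group $C^{\ef}(i)_m$ has basis $\{f_*c_\alpha\mid\alpha\in A_m,\ f\in\icat(i_\alpha,i)\}$ with the cell set $A_m$ finite and algorithmically listable. By naturality a transformation $\phi\colon(C^{\ef})_m\to\pi$ satisfies $\phi(i)(f_*c_\alpha)=\pi(f)\big(\phi(i_\alpha)(c_\alpha)\big)$, and conversely every choice of values $\phi(i_\alpha)(c_\alpha)\in\pi(i_\alpha)$ extends uniquely (freeness of the basis makes this well defined); hence $\Hom((C^{\ef})_m,\pi)\cong\prod_{\alpha\in A_m}\pi(i_\alpha)$, a finite product of fully effective abelian groups, which is again fully effective. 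Writing $\bo c_\beta=\sum k_{\beta\alpha g}\, g_*c_\alpha$ via \eqref{eq:cell} (a finite, computable expansion, with $g\in\icat(i_\alpha,i_\beta)$), the coboundary sends $(v_\alpha)$ to the tuple whose $\beta$-component, $\beta\in A_{m+1}$, is $\sum k_{\beta\alpha g}\,\pi(g)(v_\alpha)$; this is algorithmic since $C^{\ef}$ is effective, $\icat$ is finite, and the structure maps of $\pi$ are computable. The homology of such a cochain complex is then computable by the techniques of \cite{cmk}, and in degree $n$ this yields $[X,K(\pi,n)]\cong H^n(X^{\cofib};\pi)$, as desired.
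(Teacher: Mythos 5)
Your proof is correct and follows the same overall strategy as the paper's: pass to the cofibrant replacement $X^{\cofib}$ of Proposition~\ref{prop:submain} using the fibrancy of $K(\pi,n)$, identify $[X^{\cofib},K(\pi,n)]$ with $H^n(X^{\cofib};\pi)=H^n\Hom(C_*X^{\cofib},\pi)$ via the explicit model $K(\pi,n)_q=Z^n(\stdsimp{q},\pi)$, and then compute that cohomology group algorithmically. The two arguments diverge only in the final computability step, and there your route is genuinely different and, in fact, more robust. The paper asserts that effective homology of $C_*X^{\cofib}$ ``in particular implies'' that each $C_kX^{\cofib}$ is a diagram of fully effective abelian groups and then applies Lemmas~\ref{lem:hom} and~\ref{lem:ker} directly to $\Hom(C_*X^{\cofib},\pi)$; taken literally this is delicate, since $C_kX^{\cofib}(j)$ is free on the (possibly infinite) set of $k$-simplices and so need not be finitely generated, and the finiteness must really be extracted from the effective replacement. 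You instead dualize the strong equivalence $C_*X^{\cofib}\La\widehat{C}\Ra C^{\ef}$ under $\Hom(-,\pi)$, correctly observing that a reduction $(\alpha,\beta,\eta)$ of diagrams contravariantly induces a reduction of the complexes of natural transformations with the roles of $\alpha$ and $\beta$ interchanged, so that $H^n\Hom(C_*X^{\cofib},\pi)\cong H^n\Hom(C^{\ef},\pi)$; you then compute $\Hom((C^{\ef})_m,\pi)\cong\prod_{\alpha\in A_m}\pi(i_\alpha)$ from cellularity (in effect Yoneda applied to $(C^{\ef})_m\cong\bigoplus_\alpha\mathbb{Z}\icat(i_\alpha,-)$), which is a finite product of fully effective abelian groups with a computable coboundary, replacing the paper's appeal to Lemma~\ref{lem:hom}. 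What your version buys is a complete and explicit justification of the step the paper passes over quickly; what the paper's version buys is brevity, by reusing the general lemmas on fully effective abelian groups already established for the application section.
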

We postpone the proof until later and formulate a corollary regarding $G$-simplicial sets.
Let $G$ be a finite abelian group and let $X$ be a simplicial set with a $G$ action. Considerthe category $\GsSet$ of $G$-simplicial sets. This category can be equipped with a model structure, called \emph{fixed point} model structure, see \cite{alaska}.

Further, there is a category of orbits $\Ocat_G$, where the objects are the orbits $G/H$ for $H  \leq G$ and the morphisms are equivariant maps $G/H_1 \to G/H_2$. By Elemendorf's theorem \cite{elmendorf} the categories $\GsSet$ with fixed point model structure and  $\OgsSet$ with projective model structure are Quillen equivalent. We remark, that the right Quillen fully faithful functor $\Phi \colon \GsSet \to \OgsSet$ is defined by $\Phi (X) (G/H) = X^H = \{x\in X \mid hx = x, \forall h\in H\}$. Using this functor, we define the Bredon cohomology $H^* _G ( X; \pi)$ of some $X\in \GsSet$ with coefficients in a diagram $\rho\colon \Ocat_G ^\op \to \Ab$ as $H^* _G (X; \rho) = H^* (\Phi X; \rho)$.

Using the left Quillen functor $\Psi  \colon\OgsSet  \to \GsSet$ given by $\Psi (T) = (T^\cofib)(G/e)$, we define the Eilenberg--MacLane $G$-simplicial set $K_G (\rho, n) = \Psi K(\rho, n)$. Now we can state:
\begin{cor}
Let $G$ be a finite group and $\pi, \rho \colon \Ocat_G ^\op \to Ab$ diagrams of fully effective abelian groups. Then there is an algorithm computing $[K_G(\pi, n), K_G(\rho, k)]_G$.
\end{cor}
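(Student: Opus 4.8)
The plan is to transport the equivariant computation across Elmendorf's Quillen equivalence and then to apply Proposition~\ref{prop:apl}.

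\emph{Step 1: reduction to a diagram computation.} Elmendorf's theorem \cite{elmendorf} provides a Quillen equivalence between $\GsSet$ with the fixed point model structure and $\OgsSet$ with the projective model structure, with right Quillen functor $\Phi(X)(G/H) = X^H$ and left (derived) Quillen functor $\Psi$, $\Psi(T) = (T^\cofib)(G/e)$. Since $K_G(\pi,n) = \Psi K(\pi,n)$ and $K_G(\rho,k) = \Psi K(\rho,k)$, and since the functor induced by $\Psi$ on homotopy categories is an equivalence and hence fully faithful, we get
\[
[K_G(\pi,n), K_G(\rho,k)]_G \;=\; [\Psi K(\pi,n), \Psi K(\rho,k)]_G \;\cong\; [K(\pi,n), K(\rho,k)],
\]
the right-hand side being homotopy classes of maps of diagrams in $\OgsSet$. (Equivalently, one applies the derived adjunction $[\Psi K(\pi,n),-]_G \cong [K(\pi,n),\Phi(-)]$ together with the fact that the derived unit $K(\rho,k)\to \Phi\Psi K(\rho,k)$ is a weak equivalence, $K(\rho,k)$ being pointwise a Kan complex and therefore projectively fibrant.) We only need this bijection to exist; the algorithm need not exhibit it.

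\emph{Step 2: applying Proposition~\ref{prop:apl}.} We invoke Proposition~\ref{prop:apl} with $\icat = \Ocat_G^\op$, $X = K(\pi,n)$, coefficient diagram $\rho$, and degree $k$. The category $\Ocat_G$ is finite, because $G$ has finitely many subgroups and each hom-set $\Ocat_G(G/H_1,G/H_2)\cong (G/H_2)^{H_1}$ is finite; hence $\Ocat_G^\op$ is finite. For each object $G/H$ we have $K(\pi,n)(G/H) = K(\pi(G/H),n)$, an Eilenberg--MacLane space of the fully effective abelian group $\pi(G/H)$, which carries effective homology (Sergeraert's theory; see \cite{serg, cmk}). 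The structure maps of the diagram $K(\pi,n)$ are induced functorially by the homomorphisms $\pi(f)$, which are computable because $\pi$ is a diagram of fully effective abelian groups; hence all maps in the diagram $K(\pi,n)$ are computable. Finally $\rho$ is a diagram of fully effective abelian groups by hypothesis. Proposition~\ref{prop:apl} then yields an algorithm computing $[K(\pi,n), K(\rho,k)]$, which by Step~1 is exactly the desired set $[K_G(\pi,n), K_G(\rho,k)]_G$.

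\emph{Main obstacle.} The delicate point is Step~1: one must check carefully that the specified model $K_G(\rho,k) = \Psi K(\rho,k)$ really does correspond, under the Quillen equivalence, to the standard diagram model $K(\rho,k)$ used in Proposition~\ref{prop:apl} --- i.e. that $\Psi$ as defined computes the total left derived functor and that the derived unit isomorphism applies to the fibrant object $K(\rho,k)$. Everything after that is a routine check of the hypotheses of Proposition~\ref{prop:apl}.
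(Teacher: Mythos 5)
Your proposal is correct and follows exactly the same route as the paper: apply Elmendorf's theorem to reduce to $[K(\pi,n),K(\rho,k)]$ in $\OgsSet$, note that $K(\pi,n)$ is a diagram with pointwise effective homology (the paper cites \cite{krcal, polypost} for the Eilenberg--MacLane spaces), and conclude with Proposition~\ref{prop:apl}. Your version merely spells out the hypothesis-checking (finiteness of $\Ocat_G$, computability of the structure maps) in more detail than the paper does.
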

\begin{proof}
Using the Elmendorf's theorem \cite{elmendorf, stephan, alaska}, we get that  
\[
[K_G(\pi, n), K_G(\rho, k)] \cong  [K(\pi, n), K(\rho, k)]
\]
where $K(\pi, n)$ is a diagram with pointwise effective homology \cite{krcal, polypost}. The proof is finished using Proposition \ref{prop:apl}. Notice that we have computed $H^k _G (K_G ^\cofib (\pi, n); \rho)$.
\end{proof}

We now sum up the necessary information about fully effective abelian groups. For more detailed information we refer the reader to \cite{cmk}.
\subsection*{Fully effective abelian groups}

A \emph{fully effective abelian} group $A$ consists of 
\begin{itemize}
\item a set of representatives $\mathcal{A}$ which we imagine are stored in a computer. The element represented by an $\alpha \in \mathcal{A}$ is denoted $[\alpha]$,
\item algorithms that provide us with a representative for neutral element, product and inverse. In more detail we can compute $0\in \mathcal{A}$ such that $[0] = e$, given any $\alpha, \beta \in \mathcal{A}$ we compute $\gamma \in \mathcal{A}$ such that $[\gamma] = [\alpha] + [\beta]$ and for any $\alpha \in \mcA$ we can compute $\beta \in \mcA$ such that $[\beta] = - [\alpha]$,
\item a list of generators $a_1, \ldots, a_r$ (given by by the representatives) and numbers $q_1,  \ldots, q_r \in \{ 2,3,, \ldots \} \cup \{ 0\}$,
\item An algorithm that given $\alpha \in \mathcal{A}$ computes integers $z_1, \ldots, z_r$ such that $[\alpha] = \sum_{i=1} ^r z_i a_i$.
\end{itemize}

We call a mapping $f \colon A \to B$ of fully effective abelian groups  \emph{computable homomorphism} if there is a computable mapping of sets $\phi \colon \mathcal{A} \to \mathcal{B}$ such that $f([\alpha] = [\phi(\alpha)])$.

In other words, the above mentioned structure of fully effective abelian group $A$ gives us a computable isomorphism (together with its inverse) $A \cong \bbZ/q_1 \oplus \cdots \oplus \bbZ/q_r$ where $\bbZ/0 \cong \bbZ$.

The proof of the following lemma can be found in \cite{cmk}.
\begin{lem}\label{lem:ker}
Let $f \colon A \to B$ be a computable homomorphism of fully effective abelian groups. Then both $\Ker (f)$ and $\coker (f)$ can be represented as fully effective abelian groups.
\end{lem}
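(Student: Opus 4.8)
The plan is to reduce the whole statement to linear algebra over $\bbZ$, where the relevant normal form algorithms --- Hermite and Smith normal form of integer matrices, together with the accompanying unimodular change-of-basis matrices and their inverses --- are classical and effective. The definition of a fully effective abelian group hands us computable isomorphisms $A \cong \bbZ/q_1 \oplus \cdots \oplus \bbZ/q_r$ and $B \cong \bbZ/p_1 \oplus \cdots \oplus \bbZ/p_s$; writing $Q_A = \mathrm{diag}(q_1,\dots,q_r)$ and $Q_B = \mathrm{diag}(p_1,\dots,p_s)$ (with the convention $\bbZ/0 = \bbZ$), this exhibits $A = \coker\bigl(Q_A\colon\bbZ^r\to\bbZ^r\bigr)$ and $B = \coker\bigl(Q_B\colon\bbZ^s\to\bbZ^s\bigr)$. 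Running the coordinate algorithm of $B$ on the images $f(a_1),\dots,f(a_r)$ of the generators of $A$ produces an integer $s\times r$ matrix $M$, a lift of $f$ to $\widetilde f\colon\bbZ^r\to\bbZ^s$; well-definedness of $f$ forces $M\,\im(Q_A)\subseteq\im(Q_B)$.

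For the cokernel one observes that $\coker f = \bbZ^s/(\im(Q_B)+\im(M)) = \coker\bigl([\,Q_B\mid M\,]\bigr)$ for the $s\times(s+r)$ block matrix. Computing its Smith normal form $U\,[\,Q_B\mid M\,]\,V = D$ with $U,V$ invertible over $\bbZ$ yields $\coker f \cong \bigoplus_j \bbZ/d_j$, where the $d_j$ are the diagonal entries of $D$, the summands with $d_j = 1$ discarded and those with $d_j = 0$ read as $\bbZ$. Composing with the structure isomorphism of $B$, the matrices $U$ and $U^{-1}$ give this isomorphism and its inverse explicitly, and from these one reads off representatives, the group operations, the list of generators with their orders and the coordinate algorithm, i.e.\ the full fully effective structure of $\coker f$.

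For the kernel I would first compute the sublattice $L = \{x\in\bbZ^r : Mx\in\im(Q_B)\}\subseteq\bbZ^r$, which is the image under the projection $\bbZ^{r+s}\to\bbZ^r$ of $\Ker\bigl([\,M\mid -Q_B\,]\colon\bbZ^{r+s}\to\bbZ^s\bigr)$; a $\bbZ$-basis $l_1,\dots,l_t$ of $L$ is obtained by applying Smith normal form to $[\,M\mid -Q_B\,]$ to get a basis of that kernel, projecting to a generating set of $L$, and Hermite-normalizing. Since $\im(Q_A)\subseteq L$, expressing each column $q_ie_i$ of $Q_A$ in the basis $l_1,\dots,l_t$ gives an integer matrix $N$ with $\Ker f = L/\im(Q_A)\cong\coker(N)$, so a final Smith normal form of $N$ delivers the fully effective structure of $\Ker f$ exactly as for the cokernel, its elements being carried along as elements of $A$ through these bases.

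The argument contains no genuine obstacle --- the mathematical content is precisely that integer normal forms are computable --- so the step needing the most care is the bookkeeping: one must retain the change-of-basis matrices together with their inverses and correctly pull back the coordinate algorithm of the ambient group, so that the output honestly satisfies every axiom of a fully effective abelian group rather than merely having the correct isomorphism type. Full details can be found in \cite{cmk}.
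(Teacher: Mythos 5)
Your proof is correct, and it is essentially the argument of the cited source: the paper itself gives no proof of this lemma beyond pointing to \cite{cmk}, where kernels and cokernels of computable homomorphisms between fully effective abelian groups are likewise obtained by lifting $f$ to an integer matrix and running Smith/Hermite normal form while tracking the change-of-basis matrices. The only point worth stressing, which you already flag, is that the basis/coordinate algorithms must be pulled back through those unimodular matrices so that all axioms of a fully effective abelian group (not just the isomorphism type) are delivered.
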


Let $\icat$ be a finite category and let $\pi \colon \icat \to \Ab$ be a diagram such that every $\pi(i)$ is fully effective abelian and every morphism is computable homomorphism. We will call such diagram a \emph{diagram of fully effective abelian groups}. As a consequence of the previous lemma, we get
\begin{lem}\label{lem:hom}
Let $\icat$ be a finite category and let $\pi, \rho \colon \icat \to \Ab$ be a diagrams of fully effective abelian groups. Then $\Hom(\pi, \rho)$ is a fully effective abelian group.
\end{lem}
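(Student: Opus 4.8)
The statement to prove is Lemma~\ref{lem:hom}: if $\icat$ is finite and $\pi,\rho\colon\icat\to\Ab$ are diagrams of fully effective abelian groups, then $\Hom(\pi,\rho)$ — the abelian group of natural transformations $\pi\Rightarrow\rho$ — is again a fully effective abelian group. The plan is to realize $\Hom(\pi,\rho)$ as the kernel of a computable homomorphism between two explicitly built fully effective abelian groups and then invoke Lemma~\ref{lem:ker}.

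First I would assemble the ``ambient'' group of all pointwise families of maps, without the naturality constraint: set
\[
P \;=\; \prod_{i\in\icat}\Hom_{\Ab}\bigl(\pi(i),\rho(i)\bigr).
\]
Since $\icat$ is finite this is a finite product, and each factor $\Hom_{\Ab}(\pi(i),\rho(i))$ is fully effective: using the presentations $\pi(i)\cong\bbZ/p_1\oplus\cdots$ and $\rho(i)\cong\bbZ/q_1\oplus\cdots$ coming from the fully effective structure, a homomorphism between them is a matrix of integers modulo the obvious relations, so the hom-group is a computable finite direct sum of cyclic groups (this is essentially the classical computation of $\Hom(\bbZ/a,\bbZ/b)\cong\bbZ/\gcd(a,b)$, assembled into a matrix); a representative of an element of $P$ is just the tuple of such matrices, and addition, inverse, zero, the generator list and the coordinate algorithm are all obtained componentwise. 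A family $(\varphi_i)_{i}\in P$ is a natural transformation iff for every morphism $f\colon i\to j$ in $\icat$ one has $\rho(f)\circ\varphi_i=\varphi_j\circ\pi(i\to j)$; equivalently the ``defect'' map
\[
D\colon P\;\longrightarrow\;\prod_{(f\colon i\to j)\in\mathrm{mor}(\icat)}\Hom_{\Ab}\bigl(\pi(i),\rho(j)\bigr),
\qquad
D\bigl((\varphi_i)_i\bigr)_f \;=\; \rho(f)\varphi_i-\varphi_j\pi(f),
\]
vanishes. The target $Q$ of $D$ is again a finite product of fully effective abelian groups, hence fully effective, and $D$ is a computable homomorphism because each $\rho(f)$ and $\pi(f)$ is a computable homomorphism and composition of computable homomorphisms is computable (one precomposes the relevant integer matrices). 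By construction $\Hom(\pi,\rho)=\Ker(D)$, so Lemma~\ref{lem:ker} immediately yields that it is a fully effective abelian group.

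The only genuinely non-routine point is the very first reduction: exhibiting $\Hom_{\Ab}(A,B)$ as a fully effective abelian group when $A,B$ are fully effective. Given the cyclic decompositions of $A$ and $B$ this is a finite linear-algebra-over-$\bbZ$ computation (Smith normal form / gcd computations on the structure matrices), and once $\Hom_{\Ab}(A,B)$ is presented as $\bbZ/d_1\oplus\cdots\oplus\bbZ/d_s$ one must also record, as part of the data, the two algorithms "integer matrix $\mapsto$ element of that cyclic decomposition" and back, so that $D$ is manifestly computable; this bookkeeping is straightforward but is where all the actual work sits. Everything after that — finiteness of the products, componentwise verification of the fully effective axioms, computability of $D$, and the appeal to Lemma~\ref{lem:ker} — is formal. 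I expect no obstacle beyond this, precisely because $\icat$ is finite, which keeps both $P$ and $Q$ finite products.
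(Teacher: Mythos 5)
Your proposal is correct and follows essentially the same route as the paper: realize $\Hom(\pi,\rho)$ as the kernel of the computable ``naturality defect'' homomorphism from $\prod_{i}\Hom(\pi(i),\rho(i))$ to $\prod_{f\colon i\to i'}\Hom(\pi(i),\rho(i'))$ and apply Lemma~\ref{lem:ker}; your map $D$ is exactly the paper's $F$. The only difference is that you spell out the (correct, routine) verification that $\Hom_{\Ab}(A,B)$ is fully effective for fully effective $A,B$, which the paper simply asserts.
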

\begin{proof}
First notice that each $\Hom(\pi(i), \rho (i')) $ is a fully effective abelian group. Secondly, we can see that $\Hom(\pi, \rho) \leq \prod_{i\in \icat} \Hom(\pi(i), \rho (i))$. We define a homomorphism
\[
F\colon  \prod_{i\in \icat} \Hom(\pi(i), \rho (i)) \to  \prod_{f\colon i\to i'}\Hom(\pi(i), \rho (i'))
\]
for any $g \in \prod_{i\in \icat} \Hom(\pi(i), \rho (i)) $ as follows:
\[
F(g) =   (\rho(f)g(i) - g(i')\pi(f))_{f\colon i\to i'}.
\]
Then the desired $\Hom(\pi, \rho)$ is equal to $\Ker F$. As $\icat$ is a finite category , $ \prod_{i\in \icat} \Hom(\pi(i), \rho (i))$,  $\prod_{f\colon i\to i'}\Hom(\pi(i), \rho (i'))$ are both fully effective abelian groups. Because $F$ is computable, Lemma \ref{lem:ker} gives us that $\Ker F$ is fully effective.
\end{proof}
\subsection*{Proof of Proposition \ref{prop:apl}}
We remark that $K(\pi, n)$ is already a fibrant object, and by Proposition \ref{prop:submain}, $X^\cofib$ has effective homology. We must now compute  $[X^\cofib, K(\pi, n)]$ Thanks to the model we are using, we can use the constructions in \cite{may}, section 24, as presented in \cite{goja}, p. 345, and we get an isomorphism $H^n(X^\cofib; \pi) \cong [X^\cofib, K(\pi, n)]$. By Proposition \ref{prop:submain}, the chain complex $C_* X^\cofib$ has effective homology and this in particular implies that each $C_n  X^\cofib$ is a diagram of fully effective abelian groups.  By Lemma \ref{lem:hom} each $ \Hom(C_k X^\cofib, \pi)$ is a fully effective abelian group and using Lemma \ref{lem:ker}, we get that  $H^n \Hom(C_* X^\cofib, \pi)$ is an effective abelian group, which concludes the proof.\qed

\subsection*{Acknowledgements}
The author would like to thank L.Vok\v{r}\'{i}nek and for his helpful advice and remarks in particular with the formulation of Proposition \ref{prop:main}. I also thank my supervisor M.\v{C}adek for useful discussions, suggestions and comments.


\begin{thebibliography}{99}
\bibitem{bredon}
G. Bredon.
\emph{Equivariant cohomology theories.} Springer Lecture Notes in Mathematics Vol 34, 1967.

\bibitem{brown}
R. Brown.
\emph{The twisted Eilenberg-Zilber theorem}
Celebrazioni Archimedee del Secolo XX, Simposio di topologia (1964).

\bibitem{bous}
A.K. Bousfield, D.M. Kan.
\emph{Homotopy Limits, Completions and Localizations.} 
SLNM 304, Springer, Berlin 1972.

\bibitem{cmk}
M.\v{C}adek, M.Kr\v{c}\'{a}l, J.Matou\v{s}ek, F. Sergeraert, L.Vok\v{r}\'{i}nek, U.Wagner.
\emph{Computing all maps into a sphere.} J. ACM, {\bf 61} (2014), 17;1--44.

\bibitem{polypost}
M.\v{C}adek, M.Kr\v{c}\'{a}l, J.Matou\v{s}ek, L.Vok\v{r}\'{i}nek, U.Wagner.
\emph{Polynomial time computation of homotopy groups and Postnikov systems in fixed dimensions.} To appear in SIAM J. Comput., preprint at \verb+arXiv:1211.3093+, 2012.

\bibitem{aslep}
M.\v{C}adek, M.Kr\v{c}\'{a}l, L.Vok\v{r}\'{i}nek.
\emph{Algorithmic solvability of lifting extension problem},
\verb+arXiv:1307.6444+, 2013.

\bibitem{dugger}
D. Dugger.
\emph{A primer on homotopy colimits.}
\url{http://math.uoregon.edu/~ddugger/hocolim.pdf},  2008.

\bibitem{dwykan}
W. G. Dwyer, D. M. Kan.
\emph{An obstruction theory for diagrams of simplicial sets.}
Proc. Kon. Akad. van Wetensch, A87 (1984), pp. 139–146.

\bibitem{eml1}
S. Eilenberg, S. MacLane.
{On the groups $H(\pi, n)$ I.}
Annals of Mathematics,  \textbf{58} (1953), pp. 55--106.

\bibitem{eml2}
S. Eilenberg, S. MacLane.
{On the groups $H(\pi, n)$ II.}
Annals of Mathematics, \textbf{ 60} (1954), pp. 49--139.

\bibitem{elmendorf}
A. D. Elmendorf.
{Systems of fixed points sets.}
Transactions of the American Mathematical Society, vol. 277, no 1, 1983.

\bibitem{fv}
M. Filakovsk\'{y}, L.Vok\v{r}\'{i}nek.
\emph{Are two maps homotopic? An algorithmic viewpoint.}
\verb+arXiv:1312.2337+, 2013.

\bibitem{goja}
P. G. Goerss, J. F. Jardine.
\emph{Simplicial homotopy theory.}
Boston-Basel-Berlin: Birkhauser; 1999.


\bibitem{gug}
V.K.A.M. Gugenheim. 
\emph{On the chain-complex of a fibration.}
Illinois Journal of Mathematics, (1972), vol. 16, pp. 398-414.

\bibitem{hirsch}
P.S.Hirschhorn.
\emph{Model Categories and their Localizations.}
Mathematical Surveys and Monographs, (2003), vol. 99.

\bibitem{heras}
J. Heras.
\emph{Effective homology for the pushout of simplicial sets}
\url{http://staff.computing.dundee.ac.uk/jheras/papers/ehotposs.pdf}.

\bibitem{hovey}
M. Hovey.
\emph{Model Categories and their Localizations.}
Mathematical Surveys and Monographs, (1999), vol. 63.

\bibitem{isac}
S.B. Isaacson.
\emph{Exercises on homotopy colimits.}
\url{http://math.mit.edu/~mbehrens/TAGS/Isaacson_exer.pdf}.

\bibitem{krcal}
M.Kr\v{c}\'{a}l, J.Matou\v{s}ek, l, F. Sergeraer.
\emph{Polynomial-time homology for simplicial Eilenberg-MacLane spaces.}
\verb+arXiv:1201.6222+, 2012.

\bibitem{may}
J. P. May.
\emph{Simplicial Objects in Algebraic Topology.} Chicago lectures in Mathematics. 1992 reprint of 1967 original.

\bibitem{alaska}
J. P. May et al.
\emph{Equivariant homotopy and cohomology theory : dedicated to the memory of Robert J. Piacenza.}
Providence, 1996.

\bibitem{serg}
J. Rubio, F. Sergeraert.
\emph{Constructive Homological Algebra and Applications.} Preprint, \verb+arXiv:1208.3816v2+, 2012. Written in 2006 for a MAP Summer School at the University of Genova.

\bibitem{shih}
W. Shih.
\emph{Homologie des espaces fibrés.}
Publications Math ematiques del’ I.H.E.S. , 1962.

\bibitem{stephan}
M. Stephan.
\emph{On equivariant homotopy theory for model categories.}
\verb+arXiv:1308.0856+, 2013.

\end{thebibliography}
\end{document}